
\documentclass[journal]{IEEEtran}
%

\IEEEoverridecommandlockouts                              
\pdfminorversion=4
\usepackage{url}

\usepackage{amsmath, amssymb, bbm, xspace}
\usepackage{epsfig}
\usepackage{longtable}
\usepackage{color}
\usepackage{mathrsfs}
\usepackage{comment}

\usepackage{courier}



\newtheorem{theorem}{Theorem}[section]

\newtheorem{lemma}[theorem]{Lemma}

\newtheorem{proposition}[theorem]{Proposition}

\newtheorem{definition}{Definition}[section]

%

\def\bkE{{\rm I\kern-.17em E}}
\def\bk1{{\rm 1\kern-.17em l}}
\def\bkD{{\rm I\kern-.17em D}}
\def\bkR{{\rm I\kern-.17em R}}
\def\bkP{{\rm I\kern-.17em P}}
\def\Cov{{\rm Cov}}

\def\bkZ{{\bf{Z}}}

\def\bfzero{{\bf 0}}

\def\bkE{{\rm I\kern-.17em E}}
\def\bk1{{\rm 1\kern-.17em l}}
\def\bkD{{\rm I\kern-.17em D}}
\def\bkR{{\rm I\kern-.17em R}}
\def\bkP{{\rm I\kern-.17em P}}

\makeatletter
\newcommand{\pushright}[1]{\ifmeasuring@#1\else\omit\hfill$\displaystyle#1$\fi\ignorespaces}
\newcommand{\pushleft}[1]{\ifmeasuring@#1\else\omit$\displaystyle#1$\hfill\fi\ignorespaces}
\makeatother


\def\bkZ{{\bf{Z}}}
\def\b12{(\beta_1,\beta_2)}

\newenvironment{example}{{\noindent \bf Example}}{\hfill $\square$\hspace{-4.5pt}\vspace{6pt}}
\newcounter{example}
\renewcommand{\theexample}{\thesection.\arabic{example}}
\newenvironment{examplec}[1][]{\refstepcounter{example}
\par\medskip \noindent%
   \textbf{Example~\theexample. #1} \rmfamily}{\hfill $\square$   \hspace{-4.5pt} \vspace{6pt}}

\newcounter{remark}
\renewcommand{\theremark}{\thesection.\arabic{remark}}
\newenvironment{remarkc}[1][]{\refstepcounter{remark}
\noindent{\itshape Remark~\theremark. #1} \rmfamily}{\hspace*{\fill}~$\square$\vspace{0pt}}

\def\t{^\top}
\def\Bscr{\mathscr{B}}

\def\Ebb{\mathbb{E}}
\newlength{\noteWidth}
\setlength{\noteWidth}{.75in}
\long\def\notes#1{\ifinner
{\tiny #1}
\else
\marginpar{\parbox[t]{\noteWidth}{\raggedright\tiny #1}}
\fi\typeout{#1}}

 \def\notes#1{\typeout{read notes: #1}} 



\newcommand{\I}[1]{\mathbb{I}_{\{#1\}}}

\newcommand{\ie}{i.e.\@\xspace} 
\newcommand{\eg}{e.g.\@\xspace} 



\newcommand{\Real}{\ensuremath{\mathbb{R}}}
\newcommand{\inv}{^{-1}}

\newcommand{\minimize}[1]{\displaystyle\minim_{#1}}
\newcommand{\minim}{\mathop{\hbox{\rm min}}}

\def\OPT{{\rm OPT}}

\def\Ebb{\mathbb{E}}

\def\Nbb{{\mathbb{N}}}

\def\limn{\ds \lim_{n \rightarrow \infty}}

\def\det{\mathop{{\rm det}}}
\def\diag{\mathop{\hbox{\rm diag}}}
  
\def\exp{\mathop{\hbox{\rm exp}}}

\def\range{{\rm range}}

\def\half  {{\textstyle{1\over 2}}}

\def\limk{\lim_{k\to\infty}}

\def\norm#1{\|#1\|}

\def\spose#1{\hbox to 0pt{#1\hss}}
\def\sub#1{^{\null}_{#1}}
\def\text #1{\hbox{\quad#1\quad}}

\def\Escr{\mathcal{E}}


\def\nthinsp{\mskip -2   mu}




\def\superstar{^{\raise 0.5pt\hbox{$\nthinsp *$}}}
\def\SUPERSTAR{^{\raise 0.5pt\hbox{$*$}}}

\def\lamstarT {\lambda^{\raise 0.5pt\hbox{$\nthinsp *$}T}}



\def\Bscr{{\cal B}}
\def\Fscr{{\cal F}}

\def\Lscr{{\cal L}}

\def\Vscr{{\cal V}}

\def\Nscr{{\cal N}}
\def\Rscr{{\cal R}}
\def\Gscr{{\cal G}}

\def\th{^{\rm th}}

\def\xbar{\skew{2.8}\bar x}

\def\aur{\;\textrm{and}\;}

\def\eef{\;\textrm{if}\;}
\def\ow{\;\textrm{otherwise}\;}

\def\non{\nonumber}

\let\forallnew\forall
\renewcommand{\forall}{\forallnew\ }
\let\forall\forallnew

\def\ds{\displaystyle}
		\def\bkE{{\rm I\kern-.17em E}}
		\def\bk1{{\rm 1\kern-.17em l}}
		\def\bkD{{\rm I\kern-.17em D}}
		\def\bkR{{\rm I\kern-.17em R}}
		\def\bkP{{\rm I\kern-.17em P}}
		\def\bkY{{\bf \kern-.17em Y}}
		\def\bkZ{{\bf \kern-.17em Z}}
		\def\bkC{{\bf  \kern-.17em C}}


%
{\begin{list}{}%
         {\setlength{\leftmargin}{#1}}%
         \item[]%
}
{\end{list}}

		\def\bsp{\begin{split}}
		\def\beq{\begin{eqnarray}}
		\def\bal{\begin{align*}}
		\def\bc{\begin{center}}
		\def\be{\begin{enumerate}}
		\def\bi{\begin{itemize}}
		\def\bs{\begin{small}}
		\def\bS{\begin{slide}}
		\def\ec{\end{center}}
		\def\ee{\end{enumerate}}
		\def\ei{\end{itemize}}
		\def\es{\end{small}}
		\def\eS{\end{slide}}
		\def\eeq{\end{eqnarray}}
		\def\eal{\end{align*}}
		\def\esp{\end{split}}
		\def\qed{ \vrule height7.5pt width7.5pt depth0pt}  

		\def\problemsmall#1#2#3#4{\fbox
		 {\begin{tabular*}{0.47\textwidth}
			{@{}l@{\extracolsep{\fill}}l@{\extracolsep{6pt}}l@{\extracolsep{\fill}}c@{}}
				#1 & $\minimize{#2}$ & $#3$ & $ $ \\[5pt]
					  $\sub\ $ &    & $#4$ & $ $
			\end{tabular*}}
			}

	\def\cp2problem#1#2#3#4{\fbox
		 {\begin{tabular*}{0.9\textwidth}
			{@{}l@{\extracolsep{\fill}}l@{\extracolsep{6pt}}l@{\extracolsep{\fill}}c@{}}
				#1 & & $#4 $ 
			\end{tabular*}}}

\newcommand{\pmat}[1]{\begin{pmatrix} #1 \end{pmatrix}}

		\def\bkE{{\rm I\kern-.17em E}}
		\def\bk1{{\rm 1\kern-.17em l}}
		\def\bkD{{\rm I\kern-.17em D}}
		\def\bkR{{\rm I\kern-.17em R}}
		\def\bkP{{\rm I\kern-.17em P}}
		
		\def\bkZ{{\bf{Z}}}

\newcommand {\beeq}[1]{\begin{equation}\label{#1}}
\newcommand {\eeeq}{\end{equation}}
\newcommand {\bea}{\begin{eqnarray}}
\newcommand {\eea}{\end{eqnarray}}

\def\texitem#1{\par\smallskip\noindent\hangindent 25pt
               \hbox to 25pt {\hss #1 ~}\ignorespaces}



\def\bsp{\begin{split}}
		\def\beq{\begin{eqnarray}}
		\def\bal{\begin{align*}}
		\def\bc{\begin{center}}
		\def\be{\begin{enumerate}}
		\def\bi{\begin{itemize}}
		\def\bs{\begin{small}}
		\def\bS{\begin{slide}}
		\def\ec{\end{center}}
		\def\ee{\end{enumerate}}
		\def\ei{\end{itemize}}
		\def\es{\end{small}}
		\def\eS{\end{slide}}
		\def\eeq{\end{eqnarray}}
		\def\eal{\end{align*}}
		\def\esp{\end{split}}
		\def\qed{ \vrule height7.5pt width7.5pt depth0pt}  


\usepackage{amsmath, amssymb, xspace}
\usepackage{epsfig}
\usepackage{longtable}
\usepackage{color}
\usepackage{mathrsfs}
\usepackage{subfig}
\newenvironment{proof}[1][]{{\noindent \emph {Proof} #1: }}{\hfill \qed \vspace{3pt}\\ }

\def\Nscr{{\cal N}}

\def\sub{\hbox{\rm s.t}}

\ifCLASSINFOpdf
\else
\fi
\def\Hbf{{\bf H}}

\begin{document}
%

\author{Ankur A. Kulkarni\thanks{Ankur is with the Systems and Control Engineering group  
at the Indian Institute of Technology Bombay, Mumbai, India 400076. He can be contacted at \texttt{kulkarni.ankur@iitb.ac.in}. This work was presented in part at the IEEE Conference on Decision and Control, 2015, held in Osaka, Japan~\cite{kulkarni2015approximately}.}}



\title{Near-Optimality of Linear Strategies for Static Teams with `Big' Non-Gaussian Noise}

\maketitle

\begin{abstract}
We study stochastic team problems with static information structure where we assume controllers have linear information and quadratic cost but allow the noise to be from a non-Gaussian class. When the noise is Gaussian, it is well known that these problems admit linear optimal controllers. We show that for such linear-quadratic static teams with any log-concave noise, if the length of the noise or data vector becomes large compared to the size of the team and their observations, then linear strategies approach optimality for `most' problems.
The quality of the approximation improves as length of the noise vector grows and the class of problems for which the approximation is asymptotically not exact approaches a set of measure zero.
We show that if the optimal strategies for problems with log-concave noise converge pointwise, they do so to the (linear) optimal strategy for the problem with Gaussian noise. And we derive an asymptotically tight error bound on the difference between the optimal cost for the non-Gaussian problem and the best cost obtained under linear strategies. 
\end{abstract}


%
\IEEEpeerreviewmaketitle

\section{Introduction}
The assumption of Gaussian noise is ubiquitous in engineering. A common justification for this assumption is the central limit theorem -- one assumes that the noise present in a system is the aggregate, of a large number of small, independent and identically distributed disturbances, which by the central limit theorem converges in distribution to a normal distribution. 

In stochastic control the assumption of Gaussian noise together with further assumptions of quadratic cost and linear dynamics (the so-called classical `LQG' setting) is the source of a number of clean and elegant results. If the problem at hand is filtering, then the optimal estimator admits a beautiful recursive form given by the Kalman filter. Moreover, for a control problem the optimal controller is linear in the information and a separation principle holds whereby the optimal controller can obtained as the superposition of the optimal controller of the linear quadratic regulator, and the Kalman filter, both of which can be designed independently. 

A landmark observation in this field, due to  Witsenhausen~\cite{witsenhausen_counterexample_1968}, was that these conclusions, in particular, the linearity of the optimal controller, cease to be automatic for problems with \textit{non-classical} information structures. Ever since, the quest for linearity has acquired a life of its own~\cite{ho_teams_1978,bansal87stochastic}, and questions such as tractability of these problems, their convexity and the linearity of the optimal controllers have been probed from various vantage points; see, \eg,~\cite{papadimitriou85intractable,mitter1999information,rotkowitz2006characterization,mahajan2012information,
kulkarni2014optimizer}.

 This paper is motivated by the draw  of two recent developments. First, an ongoing technological revolution  has enabled the collection and storage of vast amounts of data~\cite{bigdataharvard}. A conceivable consequence of this is a situation where partial information of this high-dimensional data is available to a small number of controllers that have to achieve a common objective. Second,  fascinating new mathematics~\cite{klartag2010high} has been discovered revealing far-reaching connections between convexity and probability, and  leading to general and powerful versions of the central limit theorem.

Motivated by this, the present paper studies stochastic control problems \textit{without} the assumption of Gaussian noise but in the regime that the noise vector is high dimensional. Our focus is on team problems with \textit{static} information structure, a problem which has been extensively studied in the LQG setup~\cite{radner1962team,ho1980team} wherein it is known to admit linear optimal controllers. We retain the other assumptions from the LQG realm -- we assume that the cost is quadratic and that observations are linear in the environmental randomness -- but allow this randomness to be from a non-Gaussian class. We find that the quest for linearity has a fruitful end even in this problem class. Our main results show that for `most' problems such as these, linear controllers are near-optimal. The nearness improves as the length of the vector of environmental randomness grows. 

The class of noise distributions we consider are those with log-concave densities. A density $f:\Real^n\rightarrow [0,\infty)$ is log-concave if $f(x)\equiv \exp(-G(x))$ for some convex function $G.$ 
This class includes Gaussians and many other distributions, \eg, the exponential, Gamma, beta, chi-square distributions are all log-concave. We refer the reader to the survey~\cite{wellner2014log} for more. They enjoy many useful properties, \eg, log-concavity is preserved under marginals 
and more generally under linear transformations. The surprising central limit theorem is that most low-dimensional linear transformations (more, precisely low-dimensional projections) of a random vector with a log-concave density are in fact (nearly) Gaussian (Theorem~\ref{thm:klartag} of Eldan and Klartag below). 

Using the central limit theorem to justify the assumption of Gaussian noise amounts to claiming that the optimal cost is weakly continuous with respect to the noise. However, in a special case of our problem, the mean square estimation problem, it is well known that the minimum mean square error (\ie, the optimal cost)  is neither weakly upper semicontinuous, nor weakly lower semicontinuous~\cite{wu2012functional}. Thus, more specifics about the kind of the central limit theorem being used and other facts about the LQG setting would have to be exploited for showing weak continuity of the optimal cost.


\subsection{Contributions}
Our main contribution here is in showing that for ``most'' problems one does have weak continuity. 
We show an asymptotically tight error bound on the difference between the optimal cost for the non-Gaussian problem and the best cost obtained under linear strategies. Remarkably, these results are true for \textit{any} sequence of noise vectors having log-concave density  (in particular, the noise need not be drawn from any particular parametrized class such as exponentials) and for any strictly convex quadratic cost. 
As such these results apply to canonical problems such as mean square estimation and variance reduction. Indeed our results agree with the existing evidence\footnote{I thank the Senior Editor Prof James Spall for pointing this out.} that the Kalman filter produces useful results even in the case of log-concave non-Gaussian noise~\cite{spall1995kantorovich}. 
Since problems with classical and partially nested information structures can be reduced to those with static information structures~\cite{ho80another}, our results could also be applied to problems with these dynamic information structures. However, the precise nature of this extension is not worked out in this paper. We also show that as the dimension of the noise grows, 
if the solution of the non-Gaussian problem converges pointwise, it does so to the (linear) optimal solution of the Gaussian problem.

The theme of partial observations of a long noise (or state) vector has been around for a while, a prime example being the applications to the power grid. To the best of the author's knowledge, statistical implications of this theme  have not been exploited in the context of stochastic control. 
This paper is an improvement over its conference version~\cite{kulkarni2015approximately}. Specifically, the bounds in the present paper are asymptotically tight whereas the asymptotics of the earlier bounds were not unconditionally tight. This tightening is accomplished in the present paper by the derivation of new bounds and the introduction of new proof approaches (Section~\ref{sec:error} elaborates this). Moreover, several claims which were earlier shown to hold with high probability have now been refined and are shown to hold almost surely. 

\subsection{Organization}
The paper is organized as follows.  The initial part of the paper comprises of formalization of the problem definition and the derivation of supporting results required for the final theorems. Specifically, we need to define a sequence or \textit{ensemble} of team problems where the dimensions of certain parameters grow with the length of the noise vector. In Section~\ref{sec:probdef}, we introduce these preliminaries and define the problem. 
In Section~\ref{sec:convergence}, we introduce the central limit theorem that this paper rests on (Theorem~\ref{thm:klartag}) and derive the supporting results needed for our final claims. Section~\ref{sec:main} contains our main results on team problems. 
Section~\ref{sec:uniform} contains some stronger convergence results, which, though not required for our main results, we think may be useful to other researchers. 
We conclude in Section~\ref{sec:conc}.


\section{Problem definition} \label{sec:probdef}

\subsection{Preliminaries}
We assume that there is a primeval probability space $(\Omega,\Fscr,P)$  on which all random variables are defined. When a random vector (or matrix) $X$ takes values in a space $S$, we write it as ``$X \in S$". When a statement holds almost surely with respect to the a law of a random vector $X$, we write that it holds ``$X$-a.s.'' 
For any random vector $X$, let $\Rscr(X)$ denote its range, \ie, the values it is allowed to take. 
Random variables in this paper will often take values in a subspace of a Euclidean space whereby $\Rscr(X)$ is in general not the entire ambient space. 
For a random vector $X$ 
 (having a density with respect to the Lebesgue measure on $\Rscr(X)$), we denote its density by $f_X: \Rscr(X) \rightarrow [0,\infty).$ A random  vector $X \in \Real^n$ is said to be \textit{isotropic} if $\Ebb[X]=0$ and its covariance matrix $\Cov(X)=I_n$, where $I_n$ is the $n\times n$ identity matrix. 
We denote the $n$ dimensional standard normal density by $\varphi_n: \Real^n \rightarrow [0,\infty).$ Recall, $ \varphi_n(x) =\frac{1}{(2\pi)^{n/2}}\exp(-\norm{x}^2/2)$ for all $x\in \Real^n.$ The \textit{range} of a matrix $M\in \Real^{m\times n}$ is its column space and denoted  $\range(M)=\{y \in \Real^m | y=Mx, x \in \Real^n\}.$

The following information is sourced mainly from~\cite{chikuse2003statistics}. Let $G_{n,\ell}$ denote the set (called the Grassmann manifold) of all $\ell$ dimensional subspaces of $\Real^n$. 
A matrix $R \in \Real^{n \times \ell}$ is called \textit{orthonormal} if $R\t R = I_\ell$. Let $V_{n,\ell}$ denote the set (called Stiefel manifold~\cite{chikuse2003statistics,muirhead2009aspects})  of $n \times \ell$ orthonormal matrices.
The orthogonal projection of $x \in \Real^n$ on the subspace $E=\range(R)=\{z\ |\ z= Ry, y \in \Real^\ell\} \in G_{n,\ell}$ for some $R \in V_{n,\ell}$, is denoted by $\Pi_E(x).$ It is the vector $Ry,$ where $y$ is given by, 
\[\arg \min_{y' \in \Real^\ell} \norm{x - Ry'}.\]
It is easy to see that  $ \Pi_E(x)=RR\t x$ and that $R\t x$ is the vector of \textit{coordinates} of the orthogonal projection with respect to the basis given by the columns of $R.$ Let $P_{n,\ell}$ be the set of all $n \times n$ idempotent matrices (\ie, matrices $P \in \Real^{n\times n}$ such that $P^2=P$) of rank $\ell$.
To each subspace $\Vscr \in G_{n,\ell}$ corresponds a unique projection matrix $P \in P_{n,\ell}$ whose range is $\Vscr$. If $R \in V_{n,\ell}$ is such that the columns of $R$ span $\Vscr,$ then $P=RR\t.$ 

$G_{n,\ell}$ can be endowed with a unique rotationally invariant probability measure; we denote this measure by $\mu_{n,\ell}.$ This measure defines a `uniform' distribution on $G_{n,\ell}.$ To sample from $\mu_{n,\ell}$ one may equivalently sample from the uniform distribution on $P_{n,\ell}.$
There is an intimate connection between the uniform distribution on $V_{n,\ell}$ and that on $P_{n,\ell}.$ Specifically,
\begin{lemma}[Theorem 2.2.2~\cite{chikuse2003statistics}] \label{lem:pnl} 
Let $R \in \Real^{n \times \ell}$. Then $P=RR\t$ is distributed uniformly on $P_{n,\ell}$ if and only if $R$ is distributed uniformly on $V_{n,\ell}.$
\end{lemma}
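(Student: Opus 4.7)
The plan is to prove both directions by invoking the uniqueness of the rotationally invariant (Haar) probability measures on $V_{n,\ell}$ and $P_{n,\ell}$. Each manifold is a homogeneous space for the orthogonal group $O(n)$: this group acts transitively on $V_{n,\ell}$ via the left action $R \mapsto QR$, and on $P_{n,\ell}$ via conjugation $P \mapsto QPQ\t$. A standard Haar argument shows that there is a unique probability measure on each manifold invariant under the corresponding action, and these are precisely the uniform measures referred to in the statement (and $\mu_{n,\ell}$ on the corresponding Grassmannian). The lemma therefore reduces, in each direction, to verifying that the relevant push-forward distribution is invariant under the appropriate $O(n)$ action.

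For the forward direction, suppose $R$ is uniform on $V_{n,\ell}$. For every $Q\in O(n)$, the random matrix $QR$ has the same distribution as $R$, so
\[
QPQ\t \;=\; Q R R\t Q\t \;=\; (QR)(QR)\t
\]
has the same distribution as $P=RR\t$. The law of $P$ is thus invariant under $O(n)$-conjugation, and by uniqueness of the invariant measure on $P_{n,\ell}$ it coincides with the uniform distribution. The mild preliminary $R\t R = I_\ell$, needed to ensure $P$ actually lands in $P_{n,\ell}$, is immediate from $R\in V_{n,\ell}$.

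For the converse the subtlety is that $R$ is not determined by $P=RR\t$: the fiber over $P$ is the $O(\ell)$-orbit $\{RU : U\in O(\ell)\}$ under the right action of $O(\ell)$. The standard reading of the lemma takes $R$ to have a conditional distribution given $P$ that is Haar on this fiber (equivalently, $R$ is $O(\ell)$-right invariant); this is the ambiguity the excerpt is implicitly resolving when it speaks of ``sampling'' $R$. Assuming $P$ is uniform on $P_{n,\ell}$, I would show that the law of $R$ inherits $O(n)$-left invariance from the $O(n)$-conjugation invariance of $P$ combined with the fiber uniformity: for any $Q\in O(n)$, the push-forward of the conditional of $R$ given $P$ under $R\mapsto QR$ is the Haar measure on the fiber over $QPQ\t$, and $QPQ\t \stackrel{d}{=} P$. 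Together with the built-in right $O(\ell)$-invariance, these two actions generate a transitive action on $V_{n,\ell}$, so by uniqueness of Haar measure $R$ must be uniform. The main obstacle is this converse direction: making the coupling between the fiber disintegration and the $O(n)$-conjugation invariance of $P$ rigorous through a measurable selection and a standard measure-disintegration argument is the step I would write out with the most care.
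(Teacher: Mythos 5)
You should first note that the paper contains no proof of this lemma to compare against: it is quoted verbatim as Theorem 2.2.2 of Chikuse's monograph, so your argument has to stand on its own. Your forward direction does stand: left $O(n)$-invariance of the uniform law on $V_{n,\ell}$ gives conjugation-invariance of the law of $P=RR^\top$, and uniqueness of the invariant probability measure on the compact homogeneous space $P_{n,\ell}$ (which is how the paper's ``uniform'' distribution, matched to $\mu_{n,\ell}$ on $G_{n,\ell}$, is defined) finishes it; this is also the only direction the paper actually uses, namely to identify $RR^\top x$ with the projection of $x$ onto a $\mu_{n,\ell}$-random subspace. For the converse you are right to flag the fiber ambiguity: as literally stated the implication fails, e.g.\ for $n=2,\ell=1$ an $R$ uniform on a half-circle yields $RR^\top$ uniform on $P_{2,1}$ while $R$ is not uniform on $V_{2,1}$. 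Chikuse's theorem is in fact phrased as ``$P$ is uniform if and only if it \emph{can be expressed} as $HH^\top$ with $H$ uniform on $V_{n,\ell}$,'' an existence-of-representation statement, which is exactly the reading you adopt via the Haar-on-fiber conditional; under that reading your sketch is sound. Two small points: the left $O(n)$ action is already transitive on $V_{n,\ell}$, so once you have left-invariance of the law of $R$ you are done and there is no need for the right $O(\ell)$ action to ``generate'' transitivity; and the disintegration step you defer is routine, since a measurable selection $P\mapsto R_P$ exists and left multiplication by $Q$ maps the fiber over $P$ equivariantly onto the fiber over $QPQ^\top$, hence carries fiber-Haar to fiber-Haar.
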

%

Now suppose $X$ is a random vector in $\Real^n$ and $R \in \Real^{n \times \ell}$ is an orthonormal matrix. The joint density of the projection $RR\t X$ and the joint density of $R\t X$ (the ``coordinates'' of the projection mentioned above) are in essence the same. We show this in the lemma below.

\begin{lemma} \label{lem:rrt} 
 Let $n,\ell \in \Nbb, n\geq \ell,$ and let $R \in V_{n,\ell}$. Then, $f_{R\t X}(R\t x) = f_{RR\t x}(RR\t x) $ for all $x \in \Real^n.$
\end{lemma}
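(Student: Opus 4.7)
The plan is to exploit the fact that, since $R\t R = I_\ell$, the map $R:\Real^\ell \to \range(R)$ sending $y \mapsto Ry$ is a linear isometry from $\Real^\ell$ onto the $\ell$-dimensional subspace $\range(R)=\Rscr(RR\t X) \subset \Real^n$; its left (and genuine) inverse on $\range(R)$ is the restriction of $R\t$. In particular, the random vector $RR\t X$ is nothing but the image of $R\t X$ under this isometry, i.e., $RR\t X = R(R\t X)$, and the support of $RR\t X$ sits inside $\range(R)$ while the support of $R\t X$ sits inside $\Real^\ell$ — two subspaces that $R$ identifies isometrically.

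Next I would transfer events across the isometry. For any measurable $A \subseteq \Real^\ell$, the injectivity of $R$ (via the left inverse $R\t$) gives the set identity
\[
\{x \in \Real^n : R\t x \in A\} \;=\; \{x \in \Real^n : RR\t x \in RA\},
\]
where $RA := \{Ra : a \in A\} \subseteq \range(R)$, and therefore $P(R\t X \in A) = P(RR\t X \in RA)$. A change of variables then ties the two densities together: since $R$ is a linear isometry, the $\ell$-dimensional Lebesgue measure on $\range(R)$ (which is the reference measure for $f_{RR\t X}$ per the convention of Section~\ref{sec:probdef}) is the pushforward under $R$ of Lebesgue measure on $\Real^\ell$, with unit Jacobian determinant ($\det(R\t R)=1$). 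Writing both probabilities as integrals against their respective densities,
\[
\int_A f_{R\t X}(y)\, dy \;=\; P(R\t X \in A) \;=\; P(RR\t X \in RA) \;=\; \int_A f_{RR\t X}(Ry)\, dy.
\]
The arbitrariness of $A$ then forces $f_{R\t X}(y) = f_{RR\t X}(Ry)$ for (Lebesgue) a.e. $y \in \Real^\ell$, and substituting $y = R\t x$ yields the stated identity $f_{R\t X}(R\t x) = f_{RR\t X}(RR\t x)$.

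The only real obstacle is the bookkeeping between the two reference measures — Lebesgue measure on $\Real^\ell$ underlying $f_{R\t X}$ versus the induced $\ell$-dimensional Lebesgue/Hausdorff measure on the subspace $\range(R)$ underlying $f_{RR\t X}$ — but this is resolved transparently by the fact that $R$ is a length-preserving injection, so no nontrivial Jacobian factor ever appears. If one wishes to avoid an ``a.e.'' qualifier and argue pointwise everywhere, one can invoke the disintegration/continuity of $f_X$ (log-concavity, which is in force throughout the paper, in fact guarantees continuity of all marginal densities on the relative interior of their supports).
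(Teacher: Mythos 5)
Your argument is correct, and it reaches the identity by a slightly different mechanism than the paper. The paper's proof completes $R$ to a full orthogonal matrix $P=[R\;\bar R]$ and applies the standard change-of-variables formula to $V=P\t Y$ with $Y=RR\t X$, using $|\det P\t|=1$ and the observation $P\t Y=(R\t X,0)$ to read off $f_{R\t X}(R\t x)=f_{RR\t X}(RR\t x)$ directly. You instead work intrinsically on the $\ell$-dimensional subspace: the event identity $\{R\t x\in A\}=\{RR\t x\in RA\}$ plus the fact that $R$ pushes Lebesgue measure on $\Real^\ell$ forward to $\ell$-dimensional Lebesgue measure on $\range(R)$ with unit Jacobian gives $\int_A f_{R\t X}(y)\,dy=\int_A f_{RR\t X}(Ry)\,dy$ for all measurable $A$, hence equality of the densities. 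Both routes rest on the same fact (orthonormality of $R$ makes the Jacobian factor trivial); the paper's version is a one-line application of the density transformation formula after augmenting to an invertible map, while yours avoids the augmentation and makes the reference-measure bookkeeping on $\range(R)$ explicit, at the cost of the a.e.\ qualifier. On that last point, note that the cleanest repair is not continuity via log-concavity (the lemma is stated for a general $X$ with a density, so that assumption is not available): your computation shows precisely that $y\mapsto f_{RR\t X}(Ry)$ \emph{is} a density of $R\t X$, so one simply takes it as the version $f_{R\t X}$, which is exactly the implicit version-choice in the paper's change-of-variables step; with that remark your proof is complete and equivalent in strength.
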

\begin{proof} Let $Y = RR\t X$. Let $P=[R\ \bar{R}]$ where $\bar{R} \in \Real^{n \times n-\ell}$ is an orthonormal matrix, orthogonal to $R$. 

Let  $V= P\t Y = (R\t X, 0)$. By a change of variables,
\[f_{V}(v) = \frac{f_{Y}(P v)}{|\det(P\t)|}=f_{Y}(Pv), \ \  \forall v =(R\t x,0), x\in \Real^n.\]
Therefore $f_V(R\t x,0) = f_Y(RR\t x)$ for all $x \in \Real^n.$ But $V=(R\t X,0)$, whereby $f_V(R\t x,0) = f_{R\t X}(R\t x)$ for all $x \in \Real^n.$ Thus, we get the result.
\end{proof}

Note that since $R\in V_{n,\ell}$  is full rank, the range of $R\t$ is $\Real^\ell$.

\subsection{An ensemble of static team problems} \label{sec:ensemble} 
In this section we define the stochastic control problems we are interested in. 
We consider the following \textit{static} team problem. 
\begin{align*}
	\problemsmall{NG}
	{\gamma}
	{\Ebb\left[ L(u,\xi)\right] }
				 {\begin{array}{r@{\ }c@{\ }l}
			u^i&=\gamma^i(y_i),\\ 
			y_i &= H_i \xi,
	\end{array}} 
\end{align*}
where $H_i \in \Real^{\ell_i \times n}$ is a deterministic matrix of full row rank, $\xi \in \Real^n$ is a random vector, $u\triangleq (u^1,\hdots,u^m) \in \Real^m,$ is the vector of actions of $m$ players, each taking scalar actions (this is without loss of generality), 
and the decision variable is $ \gamma \triangleq (\gamma^1,\hdots,\gamma^m)$,  where each $\gamma^i: \Real^{\ell_i} \rightarrow \Real$, $i=1,\hdots,m$ is a measurable function. 
The function $L: \Real^{m+n}\rightarrow [0,\infty)$ takes the form,
\begin{equation}
L(u,\xi) = \half u\t Qu + u\t S\xi +q(\xi), \label{eq:l} 
\end{equation}
where $Q \succ 0$ and symmetric, $S\in \Real^{m\times n}$ is deterministic and $q(\xi) =\half \xi\t S\t Q\inv S\xi$. This value of $q(\xi)$ ensures that $L(u,\xi)\geq 0$ for all $(u,\xi) \in \Real^{m+n}.$ The above problem has a static information structure~\cite{ho_teams_1978} since the information of each player is not affected by  the actions of any player. 

Following the `input-output' modeling framework of Ho~\cite{ho1980team}, the vector $\xi$ comprises of all the environmental randomness in the problem, including initial state and noise. 
Importantly, we \textit{do not} assume that $\xi$ is Gaussian (indicated by the name NG standing for ``non-Gaussian''). We will assume only that $\xi$ is isotropic and that it has a log-concave density. Of course, the standard Gaussian vector also satisfies these assumptions. To motivate our setting we consider the following example.
\begin{examplec}
Consider the problem of coordinating distributed renewable generation. Suppose $m$ renewable wind generators are located at distinct locations in a country. Let $\xi$ be a random vector that denotes global meteorological conditions (pressure, temperature, humidity and so on) at all locations on the earth. We think of $\xi$ as comprising of microstates that are not directly observable. The generation produced by generator $i$ depends on relevant local weather conditions which is a function macrostates (for instance wind speed and direction) at location $i$, denoted  $y_i$. These macrostates $y_i$ are a function of the microstates $\xi$. We assume that the dependence of $y_i$ on $\xi$ takes a linear form, \ie, $y_i=H_i\xi.$ Without loss of generality one may take $H_i$ to be full row rank, since observations from rows of $H_i$ that are linearly dependent on other rows provide no additional information. Note that in general $H_i$ is non-sparse. Generators have to choose their generation levels $u^i$ such that they are adapted to $y_i$, \ie, $u^i=\gamma^i(y_i)$ for each $i$, in order to collectively minimize a quadratic cost. For instance generators may want to collectively minimize deviation from demand. Assuming that the total demand is also dependent on meteorological conditions, say it is given as $d\t \xi$, and considering the $L^2$ deviation, the problem becomes,
\[\min_{\gamma^1,\hdots,\gamma^m}\Ebb \bigg [ \norm{\sum_i u^i -d\t \xi}^2 \bigg]. \]  
We note two important points. First, components of $\xi$ are correlated, since meteorological conditions of nearby locations are interrelated. Second, $\xi$ can be extremely high dimensional, and in general much larger than the number of generation locations and dimension of individual $y_i$'s.
\end{examplec}

As in the above example, we will study problems NG in the setting where $n \gg \ell$ where $\ell\triangleq \bar{\ell}+m$ and $\bar{\ell}=\sum_i\ell_i$. As $n$ varies, so must the problem parameters $S $ and $H_i, i=1,\hdots,m$. Consequently, we need to define an ensemble of team problems that specifies the evolution of these parameters with $n.$
To this end, let $Z=Z_n \in \Real^{\ell \times n}$ denote the matrix given by
\begin{equation}
Z \triangleq \left[\begin{array}{c}
S \\ H_1\\ \vdots \\ H_m
\end{array} \right]. \label{eq:Z} 
\end{equation}
Since $n >\ell$, $Z$ can have rank at most $\ell.$ Write $Z$ as 
$$Z= WR\t$$ where $R \in V_{n,\ell}$ and $W \in \Real^{\ell \times \ell}.$ If $Z$ indeed has rank $\ell,$ then $W$ is nonsingular and the row space of $Z$ is equal to the row space of $R\t$. 
When considering the ensemble of team problems, we will fix matrices $Q,W$ (and thereby the dimension $\ell$) and let dimension $n$ and the matrix $R$ vary.

\begin{definition} Let $m \in \Nbb, \ell_i \in \Nbb, i=1,\hdots,m$ $Q \in \Real^{m\times m}, Q\succ 0$ and $W\in \Real^{\ell \times \ell}$ be fixed numbers or matrices. Let  $\{R_n\}$ be a sequence of random orthogonal matrices that are independent of each other and of all other random variables such that each $R_n $ is distributed uniformly on $V_{n,\ell}.$ Let $\{\xi_n\}$ be a sequence of random  vectors where each $\xi_n$ is $\Real^n$-valued and has a density with respect to the Lebesgue measure on $\Real^n.$ 
An \textit{ensemble} of static team problems with parameters $Q,W,\{R_n\},\{\xi_n\}$
is a sequence of problems NG, one for each $n\geq \ell$ where the problem parameters are given by
\[\Real^{\ell \times n} \ni Z=Z_n = WR_n\t, \  q(v)= \half v\t S\t Q\inv Sv \  \forall v \in \Real^n,\]
and the environmental randomness $\xi = \xi_n$, 
for each $n \geq \ell.$ The ensemble where the environmental randomness is a Gaussian vector $\zeta=\zeta_n \in \Real^n$, $\zeta_n \sim \Nscr(0,I_n)$ is referred to as the Gaussian ensemble.
\end{definition} 

\begin{remarkc} Another interpretation of the sequence of team problems defined above is as follows. We have a team problem with fixed matrices $S, H_1,\hdots,H_m$. For each $n\in \Nbb,$ the environmental noise is an orthonormal projection $R_n\t \xi_n$ of a random vector $\xi_n \in \Real^n,$ where $R_n$ is distributed uniformly on $V_{n,\ell}.$ Thus a (random) sequence of team problems is generated as we vary $n.$ All our results can also applied with this setting.
\end{remarkc}

The statements we make in this paper will be for each ensemble. In particular, they will pertain to fixed sized teams and fixed sized observations with fixed matrices $Q,W.$ 

\section{Convergence of densities} \label{sec:convergence} 

Our results make extensive use of convergence results of densities of random vectors. This section establishes these results; proofs are relegated to the Appendix. 
The core result that our conclusions rely on is a recent pointwise estimate of the density of the \textit{projection} of an isotropic random vector with a log-concave density. Recall that a density $f:\Real^n \rightarrow [0,\infty)$ is log-concave if $f(x) \equiv \exp(-G(x))$ for some convex function $G:\Real^n \rightarrow \Real.$

\begin{theorem}[Eldan and Klartag~\cite{eldan2008pointwise}] \label{thm:klartag} 
Let $X$ be an isotropic random vector in $\Real^n$ with a log-concave density and let $1 \leq \ell \leq n^{c_1}$ be an integer. Then there exists a  subset $\Escr \subseteq G_{n,\ell}$ with $\mu_{n,\ell}(\Escr) \geq 1-C\exp(-n^{c_2}) $ such that for all $E \in \Escr,$ the following holds. Let $f_E$ denote the density of $\Pi_E(X).$ Then for all $x \in E$ with $\norm{x} \leq n^{c_4}$,
\begin{equation}
\left \lvert\frac{f_E(x)}{\varphi_\ell(x)}-1\right\rvert \leq \frac{C}{n^{c_3}}, \label{eq:tvmain} 
\end{equation}
where $\varphi_\ell(x)=\frac{1}{(2\pi)^{\ell/2}}\exp(-\norm{x}^2/2)$ is the density of the $\ell$-dimensional standard normal random variable. Furthermore, 
\begin{equation}
\sup_{A \in \Bscr(E)} |P(\Pi_E(X) \in A) - P(\Gamma_E \in A)| \leq \frac{C'}{n^{c_5}}, \label{eq:tvmain2}
\end{equation}
where $\Bscr(E)$ is the Borel $\sigma$-algebra on $E$ and $\Gamma_E$ is a standard normal random variable taking values in $E.$ 
Here $C,C'c_1,c_2,c_3,c_4,c_5>0$ are universal constants. \end{theorem}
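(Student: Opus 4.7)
The plan is to split the argument into two stages: first, for a fixed subspace $E$, prove a pointwise Gaussian comparison for the density $f_E$ under a ``typicality'' condition on $E$; second, use concentration of measure on the Grassmannian $G_{n,\ell}$ to show the typicality condition holds with $\mu_{n,\ell}$-probability at least $1 - C\exp(-n^{c_2})$. Both stages rest crucially on analytic properties of isotropic log-concave measures, particularly the thin-shell phenomenon asserting that $\norm{X}/\sqrt{n}$ is sharply concentrated near $1$.

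For the pointwise comparison on a fixed typical $E$, the natural tool is Fourier analysis on $E$. The characteristic function of $\Pi_E(X)$ is $\hat f_E(\xi) = \mathbb{E}[\exp(i\langle \xi, X\rangle)]$ for $\xi \in E$, and I would compare this to the standard Gaussian characteristic function $\exp(-\norm{\xi}^2/2)$. A second-order expansion of $\hat f_E$ near the origin, combined with a quantitative thin-shell bound of the form $\mathrm{Var}(\norm{X}^2) \leq C n^{2-\alpha}$ (restricted to directions lying in $E$), yields a strong pointwise approximation for $\xi$ in a suitable compact range. Outside that range, the sub-exponential tails of log-concave densities (Borell's lemma) make $\hat f_E$ negligible. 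Fourier inversion, restricted to $\norm{x}\leq n^{c_4}$, then produces the pointwise estimate \eqref{eq:tvmain}; the constraint on $\norm{x}$ arises precisely to control high-frequency contributions during inversion.

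To pass from a fixed $E$ to $\mu_{n,\ell}$-almost every $E$, I would define a functional $\Phi: G_{n,\ell} \to \Real_+$ measuring the maximum deviation of $f_E(x)/\varphi_\ell(x)$ from $1$ over $\{x \in E : \norm{x} \leq n^{c_4}\}$. The expectation of $\Phi$ under $\mu_{n,\ell}$ can be controlled by averaging the single-subspace Fourier estimate over a Haar-random subspace, which reduces to an integrated form of the thin-shell inequality (in the spirit of Sudakov-type arguments on random projections). The deviation of $\Phi$ from its mean is then handled by Levy's isoperimetric inequality on $G_{n,\ell}$, once one verifies that $\Phi$ is Lipschitz in the natural metric on the Grassmannian. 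This gives the measure bound $\mu_{n,\ell}(\Escr) \geq 1 - C\exp(-n^{c_2})$.

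The total variation statement \eqref{eq:tvmain2} then follows from the pointwise ratio bound by integrating over $\norm{x}\leq n^{c_4}$ and estimating the complementary tail using concentration of $\norm{\Pi_E(X)}$ and $\norm{\Gamma_E}$ around $\sqrt{\ell}$, both of which are sub-Gaussian in the relevant range. The deepest and hardest input by far is the \emph{quantitative} thin-shell inequality with a polynomial rate for isotropic log-concave measures: this is highly nontrivial convex-geometric analysis (established in earlier work of Klartag and closely tied to the KLS conjecture), and it is what upgrades a qualitative CLT for convex bodies into the effective, rate-bearing statement in \eqref{eq:tvmain}--\eqref{eq:tvmain2}. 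Once it is granted, the Fourier-plus-Grassmannian-concentration scheme above is comparatively routine; without it the method delivers only convergence in distribution without the polynomial rates needed for the rest of the paper.
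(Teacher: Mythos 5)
This statement is not proved in the paper at all: it is quoted, with a citation, as a known theorem of Eldan and Klartag~\cite{eldan2008pointwise}, and the paper only comments afterwards on the values of the constants and on a reformulation via uniformly distributed matrices $R\in V_{n,\ell}$ (Lemmas~\ref{lem:pnl} and~\ref{lem:rrt}). So there is no internal proof to compare yours against; what you have written is an outline of how one might reprove the external result, and it should be judged on that basis.

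As such an outline, you name the right ingredients (a quantitative thin-shell estimate, a Gaussian comparison on a fixed typical subspace, and concentration of measure on $G_{n,\ell}$), but the central analytic step does not deliver the stated conclusion. Fourier inversion from closeness of characteristic functions on a compact frequency range, plus tail control, yields at best an \emph{additive} sup-norm bound on $f_E-\varphi_\ell$ of polynomial size $n^{-c}$. The theorem asserts a \emph{multiplicative} bound, $\lvert f_E(x)/\varphi_\ell(x)-1\rvert\le C n^{-c_3}$, valid out to $\norm{x}\le n^{c_4}$, where $\varphi_\ell(x)$ is of order $\exp(-n^{2c_4}/2)$, i.e.\ super-polynomially small; an additive error of size $n^{-c_3}$ says nothing at such points. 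Converting an additive estimate into a ratio estimate on a ball of radius $n^{c_4}$ is precisely the hard content that distinguishes the pointwise Eldan--Klartag result from the earlier total-variation central limit theorem for convex bodies, and your sketch offers no mechanism for it. In addition, the Grassmannian stage requires verifying that your deviation functional $\Phi$ is Lipschitz on $G_{n,\ell}$ with a constant small enough for L\'evy concentration to produce the $\exp(-n^{c_2})$ bound, which you assert rather than establish, and the polynomial-rate thin-shell inequality is explicitly taken as granted. So the proposal is a reasonable roadmap in spirit, but it is not a proof, and its key inversion step as written would fail to give the ratio bound \eqref{eq:tvmain}; for the purposes of this paper the correct course is simply to cite~\cite{eldan2008pointwise}, as the author does.
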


The above theorem says that if $X$ is a random vector, then for most choices of subspaces $E$, the density of $\Pi_E(X)$ approaches the standard normal density in a `large' part of the subspace $E.$
Theorem~\ref{thm:klartag} is essentially a central limit theorem. The vector $\Pi_E(X)$ is vector of linear combinations of the components of $X$. The theorem says that these linear combinations are approximately jointly Gaussian. 
However there are important distinctions that are worth noting. First, unlike in the usual central limit theorem, \eqref{eq:tvmain} is a concrete estimate of the rate of convergence of densities. Second, notice that no assumption of independence is made on the components of $X$. And finally, note that the theorem does not guarantee that \eqref{eq:tvmain} holds for any specific subspace $E$ nor does it say it holds on the entire subspace $E$. Rather, it says that if the subspace were to be chosen uniformly at random from $G_{n,\ell}$, then \eqref{eq:tvmain} holds on a large compact set in $E$ with overwhelming probability, where this probability and the size of the compact set grows with the length of $X$.

The optimal values of the universal constants in this theorem are as yet unknown~\cite{eldan2008pointwise}. Scanning the proof from~\cite{eldan2008pointwise} reveals that the theorem holds true with $c_1=c_3=\frac{1}{100},c_2=\frac{1}{10} $ and $c_4=\frac{1}{200}$. One can obtain \eqref{eq:tvmain2}  as a corollary of \eqref{eq:tvmain}, whereby one expects $c_5 \approx c_3$ and $C' \approx C.$ For the purpose of this paper, it suffices that these are absolute constants.

The LHS of \eqref{eq:tvmain2} is $d_{TV}(\Lscr_{\Pi_E(X)},\Lscr_{\Gamma_E})$, the 
total variation distance between the laws $\Lscr_{\Pi_E(X)}$ and $\Lscr_{\Gamma_E}$ of $\Pi_E(X)$ and $\Gamma_E$, respectively. But since $d_{TV}(\Lscr_{\Pi_E(X)},\Lscr_{\Gamma_E}) = \half \norm{f_E-\varphi_\ell}_1$, it follows that $f_{E_n} \buildrel{L^1}\over \longrightarrow \varphi_\ell$, in probability. Here  the probability is understood on the underlying probability space $(\Omega,\Fscr,P)$ on which a sequence of subspaces $\{E_n\}$, $E_n:\Omega \rightarrow G_{n,\ell}$ is generated such that each $E_n$ is uniformly distributed on $G_{n,\ell}$. For obtaining the kind of results that are of interest to the team problem we study, this convergence result has to be fully exploited.

There is an additional technicality we note that helps ease the presentation in the following sections. By Lemma \ref{lem:pnl}, if $R \in V_{n,\ell}$ is chosen uniformly at random, then for any $x\in \Real^n$, 
$RR\t x$ is the projection of $x$ on a subspace of $\Real^n$ chosen uniformly at random from $G_{n,\ell}.$ Consequently, in Theorem~\ref{thm:klartag},  $\Pi_E(X) $ has the same density as $RR\t X$ for $R$ distributed uniformly on $V_{n,\ell}.$ However, by Lemma~\ref{lem:rrt}, $f_{RR\t X}(RR\t x)=f_{R\t X}(R\t x)$ for all $x\in \Real^n.$ Furthermore, $\norm{R\t x} \equiv \norm{RR\t x}$. 
Consequently, \eqref{eq:tvmain} also holds in the following version: if $X$ is isotropic with a log-concave density, $1\leq \ell \leq n^{c_1}$, and $R \in V_{n,\ell}$ is uniformly distributed on $V_{n,\ell}$ and independent of $X$, then with probability at least $1-C\exp(-n^{c_2})$, 
\[\left\lvert\frac{f_{R\t X}(x) }{\varphi_\ell(x)}-1 \right\rvert \leq \frac{C}{n^{c_3}} \quad \forall x \in \Real^\ell,\ \norm{x} \leq n^{c_4}.\]
We will use this version of \eqref{eq:tvmain} in the rest of the paper.


Our first result concerns the pointwise convergence of densities. This conclusion follows by a repeated use of Theorem~\ref{thm:klartag}. The proof is in the Appendix.
\begin{lemma} \label{lem:pointwisedensity} 
Let $\ell \in \Nbb$ and let $\{\xi_n\}$ for $n \in \Nbb, n \geq \ell^{1/c_1}$ be a sequence of isotropic random vectors, such that for each $n$, $\xi_n $ is $\Real^n$-valued and  has a log-concave density with respect to the Lebesgue measure in $\Real^n.$ Consider a sequence $\{R_n\}$ of orthogonal matrices, where each $R_n $ is chosen uniformly and independently from $V_{n,\ell}$ and independently of $\{\xi_n\}$. Let $\hat{\xi}_n=R_n\t \xi_n$ be a projection of $\xi_n$ on $\Real^\ell.$ Then $\{R_n\}$-a.s.
\[ \limn f_{\hat{\xi}_n}(x) = \varphi_\ell(x)\ \qquad \forall \ x \in \Real^\ell.\] 
\end{lemma}

We next note a consequence of Theorem~\ref{thm:klartag} that concerns the approximation of marginals (\ie, densities of linear combinations) of densities that satisfy \eqref{eq:tvmain}. Notice that this result does not follow in any obvious manner from \eqref{eq:tvmain}, since computing the marginal from the joint density would require integration over the region of $\Real^\ell$ where \eqref{eq:tvmain} does not apply. The proof is in the Appendix.
\begin{lemma} \label{lem:marginals} 
Let $n,\ell \in \Nbb.$
Let $\hat{\xi} \in \Real^\ell$ be a random vector satisfying,
\begin{equation}
\left\lvert \frac{f_{\hat{\xi}}(x)}{\varphi_\ell(x)}-1 \right\rvert \leq \frac{C}{n^{c_3}},  \label{eq:b1} 
\end{equation}
for all $x\in \Real^\ell$ with $\norm{x} \leq n^{c_4},$  
where $C,c_3,c_4$ are constants from Theorem~\ref{thm:klartag}. Let $W \in \Real^{\ell \times \ell}$ be a nonzero matrix with rank $r$ and $\zeta \sim \Nscr(0,I_\ell).$ Then, for all $x$ such that $\norm{Wx} \leq \half \sigma_{\min} n^{c_4}$
\begin{equation}
f_{W\zeta}(Wx) -f_{W\hat{\xi}}(Wx) \leq \left(\frac{C}{n^{c_3}}+\tau_{\ell,r}(n^{c_4})\right)f_{W\zeta}(Wx),  
\label{eq:marginal} 
\end{equation}
where $\sigma_{\min}$ is the smallest (positive) singular value of $W$ and 
\begin{equation}
\tau_{\ell,r}(n^{c_4})=\int_{\norm{\bar{z}}>\sqrt{\frac{3}{4}}n^{c_4}}\varphi_{\ell-r}(\bar{z})d\bar{z},\label{eq:tau} 
\end{equation}
is the weight of  tail of the $\ell-r$ dimensional Gaussian distribution ($=0$ if $\ell=r$).
\end{lemma}

Notice that although \eqref{eq:marginal} pertains to marginals, \eqref{eq:marginal} is a weaker kind of estimate than the one in \eqref{eq:tvmain}, since the LHS in \eqref{eq:marginal} \textit{does not} have the absolute value of the difference between the densities.

\section{Main results} \label{sec:main} 
We now come to the main results of this paper. 
Denote 
\[J(\gamma;\rho) := \Ebb[L(\gamma^1(H_1\rho),\hdots,\gamma^m(H_m\rho),\rho)],\]
for any function $\gamma=(\gamma^1,\hdots,\gamma^m)$ where $\gamma^i:\Real^{\ell_i} \rightarrow \Real$ and a random vector $\rho$. Here the expectation is with respect to $\rho.$

We consider an ensemble of static team problems with various distributions on the environmental randomness. A particular case is the Gaussian ensemble where the environmental randomness is a Gaussian vector for each $n$. This problem is denoted G.
 \[	\problemsmall{G}
	{\gamma}
	{\Ebb\left[ L(u,\zeta)\right] }
				 {\begin{array}{r@{\ }c@{\ }l}
			u^i&=\gamma^i(z_i),\\ 
			z_i &= H_i \zeta,
	\end{array}} \]
where $\zeta \in \Real^n, \zeta \sim \Nscr(0,I_n).$ Let $\gamma_G$ be the optimal solution to this problem. It is well known~\cite{ho1980team} that the solution to this problem is linear.

 Our main claim is that for `most' problems like NG, linear policies are approximately optimal. Let $\gamma_L^*=(\gamma_{L,1}^*,\hdots,\gamma_{L,m}^*)$ be the optimal controller for NG \textit{within} the class of linear controllers. Notice that $\gamma^*_L$ is equivalently a solution of the following problem, denoted NGL,
\begin{align*}
	\problemsmall{NGL\ }
	{\gamma_L}
	{\hspace{-2pt}\left\{\begin{array}{c}
	\half \gamma_L\t \Ebb\left[ \Hbf (\xi)Q \Hbf (\xi)\t \right] \gamma_L + \gamma_L\t \Ebb\left[ \Hbf (\xi) S \xi \right]\\ + \Ebb[q(\xi)] 	
	\end{array}\right\}}
				 {\begin{array}{r@{\ }c@{\ }l}
\gamma_L &\in& \Real^{\bar{\ell}},	
	\end{array}} 
\end{align*}
where $\Hbf(\xi) \in \Real^{\bar{\ell} \times m}$ is a block diagonal matrix formed as 
\[\Hbf(\xi) = \left [ \begin{array}{cccc}
H_1\xi & 0 & \hdots & 0 \\
0 & H_2\xi & \ddots & 0 \\
0 & 0 & \ddots & 0 \\
0 & 0 & \hdots & H_m\xi
\end{array} \right ], \]
where `$0$' stands for zero-vectors of appropriate dimensions. To see this, notice that finding linear controllers is equivalent to finding matrices $K_1,\hdots,K_m$ such that 
\[u^i = K_i\t H_i\xi.\]
But since $u^i$ is a scalar for each $i$, $K_i$ is a vector. Specifically, $K_i \in \Real^{\ell_i}$ for each $i.$ Denoting $\gamma_L= (K_1\t,\hdots,K_m\t)\t \in \Real^{\bar{\ell}}$, we get, 
\[(u^1,\hdots,u^m)\t = \gamma_L\t \Hbf(\xi).\]
From this it is easy to see that $\gamma_L^*$ solves NGL.

We first show that this problem has a unique solution.

\begin{proposition} Suppose $Q \succ 0$, $\xi$ is isotropic and $H_i$ is of full row rank for each $i=1,\hdots,m$. Then the solution to NGL is unique. 
\end{proposition}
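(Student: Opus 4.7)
The plan is to observe that NGL is an unconstrained quadratic program in $\gamma_L$ with Hessian $M := \Ebb[\Hbf(\xi) Q \Hbf(\xi)\t]$, so uniqueness reduces entirely to showing $M \succ 0$. Positive semidefiniteness is immediate from $Q\succ 0$, since $\Hbf(\xi) Q \Hbf(\xi)\t \succeq 0$ pointwise. The whole task is thus to rule out a nontrivial null vector of $M$.

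The main step is the following. Suppose $v$ satisfies $v\t M v = 0$. Partition $v = (v_1,\ldots,v_m)$ conformally with the block-diagonal structure of $\Hbf(\xi)$, so that $\Hbf(\xi)\t v$ has $i$-th component $v_i\t H_i \xi$. Then, setting $w(\xi) := \Hbf(\xi)\t v \in \Real^m$,
\[
0 \;=\; v\t M v \;=\; \Ebb\!\left[w(\xi)\t Q\, w(\xi)\right].
\]
Since $Q\succ 0$, the integrand is nonnegative and $\geq \lambda_{\min}(Q) \|w(\xi)\|^2$, so $w(\xi)=0$ almost surely, i.e.\ $v_i\t H_i \xi = 0$ a.s.\ for every $i$. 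This is the key algebraic reduction.

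Next I would use isotropy of $\xi$, namely $\Ebb[\xi\xi\t]=I_n$, to conclude
\[
0 \;=\; \Ebb\!\left[(v_i\t H_i \xi)^2\right] \;=\; v_i\t H_i H_i\t v_i.
\]
Since $H_i$ has full row rank, $H_i H_i\t \succ 0$, forcing $v_i = 0$ for every $i$, and hence $v = 0$. This establishes $M \succ 0$, so the quadratic objective of NGL is strictly convex and coercive, yielding a unique minimizer $\gamma_L^* = -M^{-1}\,\Ebb[\Hbf(\xi) S \xi]$.

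The only subtlety I anticipate is making sure the partitioning of $v$ matches the block structure correctly and that one does not need any moment assumption beyond isotropy (second moments being finite is exactly what is used). The full-rank assumption on each $H_i$ is essential — without it one could have a nonzero $v_i$ in the left kernel of $H_i$ — so this hypothesis is exactly what powers the final step.
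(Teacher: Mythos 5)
Your proposal is correct and follows essentially the same route as the paper: both arguments reduce uniqueness to showing $\Ebb[\Hbf(\xi)Q\Hbf(\xi)\t]\succ 0$, using $Q\succ 0$ to pass to $\lambda_{\min}(Q)\,\Ebb[\|\Hbf(\xi)\t v\|^2]$, isotropy of $\xi$ to evaluate this as $\sum_i v_i\t H_iH_i\t v_i$, and full row rank of each $H_i$ to conclude. Your null-vector (contrapositive) phrasing versus the paper's direct lower bound $x\t M x\geq \lambda'\|x\|^2$ is only a cosmetic difference.
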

\begin{proof}
NGL is finite dimensional optimization problem. Thus it suffices to show that it is strictly convex.
We will show that $\Ebb \left[\Hbf (\xi) Q \Hbf (\xi)\t \right]  \succ 0.$
Since $Q \succ 0,$ there exists $\lambda>0$ such that $x\t Qx \geq \lambda \norm{x}^2$ for each $x \in \Real^m.$ We have, for any $x \in \Real^m$ 
\begin{align*}
x\t \Ebb[\Hbf(\xi) Q \Hbf(\xi)\t ]x &\geq \lambda \Ebb \left[ x\t \Hbf(\xi)\Hbf(\xi)\t x\right] \\
&= \lambda \sum_{i=1}^m x_i\t H_i H_i\t x_i \geq \lambda'\norm{x}^2,
\end{align*}
for some $\lambda'>0$, since $\xi$ is isotropic and each $H_i$ is of full row rank. It follows that $\Ebb \left[\Hbf (\xi) Q \Hbf (\xi)\t \right]$ is positive definite, whereby the solution of NGL is unique.
\end{proof}

Since NGL depends only on the first and second moments of $\xi$, it follows that $\gamma_G=\gamma^*_L$.

\begin{proposition} \label{prop:gammaLG} 
Let $\gamma_L^*$ and $\gamma_G$ be the solutions of NGL and G, respectively. If $\xi\in \Real^n$ is isotropic and $\zeta \in \Real^n, \zeta \sim \Nscr(0,I_n),$  then $\gamma_L^*=\gamma_G.$ Furthermore, $J(\gamma^*_L;\xi)=J(\gamma^*_L;\zeta)=J(\gamma_G;\xi)=J(\gamma_G;\zeta).$
\end{proposition}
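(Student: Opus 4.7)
The plan is to exploit the observation, already flagged in the text just before Proposition~\ref{prop:gammaLG}, that the NGL objective depends on the distribution of $\xi$ only through its first and second moments. Concretely, the coefficient $\Ebb[\Hbf(\xi) Q \Hbf(\xi)\t]$ is the expectation of a quadratic form in $\xi$ (since $\Hbf(\cdot)$ is linear), the linear term $\Ebb[\Hbf(\xi) S\xi]$ is the expectation of a bilinear/quadratic expression in $\xi$, and $\Ebb[q(\xi)] = \half \Ebb[\xi\t S\t Q\inv S\xi]$ is again a quadratic form in $\xi$. Each of these is therefore a fixed linear functional of $\Ebb[\xi]$ and $\Ebb[\xi\xi\t]$. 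Both $\xi$ and $\zeta$ being isotropic means $\Ebb[\xi]=\Ebb[\zeta]=0$ and $\Ebb[\xi\xi\t]=\Ebb[\zeta\zeta\t]=I_n$, so the three expectations agree under $\xi$ and $\zeta$.

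With this in hand the first main step is immediate: the NGL problem formed with $\xi$ and the NGL problem formed with $\zeta$ are \emph{the same} finite-dimensional quadratic program (identical Hessian, identical linear term, identical constant term). By the preceding proposition this program has a unique minimizer, so the minimizers for $\xi$ and for $\zeta$ coincide.

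The second step is to identify NGL (with $\zeta$) with the linear-restriction of G. Any linear controller $\gamma^i(z_i)=(\gamma_L)_i\t z_i$ with $z_i=H_i\zeta$ gives $u=\Hbf(\zeta)\t \gamma_L$, and substituting into $\Ebb[L(u,\zeta)]$ reproduces the NGL objective with $\zeta$ in place of $\xi$. Thus the minimizer of NGL with $\zeta$ is exactly the best linear strategy for G. Since the classical result of Radner~\cite{radner1962team} guarantees that G admits a globally optimal controller $\gamma_G$ that is linear, $\gamma_G$ is in particular optimal among linear strategies for G, and therefore $\gamma_G$ solves NGL with $\zeta$. Combining with the first step, $\gamma_G=\gamma_L^*$.

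Finally, for the four cost equalities, observe that whenever $\gamma$ is linear, the integrand $L(\gamma^1(H_1\rho),\ldots,\gamma^m(H_m\rho),\rho)$ is a quadratic function of $\rho$, hence $J(\gamma;\rho)$ depends only on $\Ebb[\rho]$ and $\Ebb[\rho\rho\t]$. Applying this to the linear strategy $\gamma_L^*=\gamma_G$ and to the two isotropic vectors $\xi$ and $\zeta$ yields $J(\gamma_L^*;\xi)=J(\gamma_L^*;\zeta)$ and likewise $J(\gamma_G;\xi)=J(\gamma_G;\zeta)$; substituting $\gamma_L^*=\gamma_G$ closes the chain of equalities. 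There is no real obstacle here beyond carefully tracking the quadratic/isotropic bookkeeping; the only tiny subtlety is being explicit that the linear parametrization of $\gamma$ used in NGL matches the block form $\Hbf(\xi)\t\gamma_L$ so that NGL can be read off either as a finite-dimensional quadratic program or as the restriction of G to linear controllers.
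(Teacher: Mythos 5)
Your proposal is correct and follows essentially the same route as the paper's own (terse) proof: both arguments rest on the known linearity of the optimal controller for G, which turns G and NGL into the same finite-dimensional quadratic program whose data depend on the noise only through its mean and covariance, and then invoke isotropy of $\xi$ and $\zeta$ (plus strict convexity from the preceding proposition) to conclude $\gamma_L^*=\gamma_G$ and the cost equalities. Your write-up simply fills in the moment bookkeeping and the identification of NGL with the linear restriction of G that the paper leaves implicit.
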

\begin{proof}
Since the optimal solution of G is linear, NGL and G are both finite dimensional optimization problems. The objective of NGL (of G) is a function of only the mean and the covariance matrix of $\xi$ (respectively, of $\zeta$). Since $\xi$ is isotropic, $\Ebb[\xi]=0=\Ebb[\zeta]$ and $\Cov(\xi)=I_n=\Cov(\zeta),$ whereby the result follows.
\end{proof}

Let $\zeta \sim \Nscr(0,I_n)$. The spherical symmetry of this standard normal random vector implies that for any $R \in V_{n,\ell}$, $R\t \zeta \sim \Nscr(0,I_\ell).$ Consequently, in a Gaussian ensemble of team problems, $\gamma_G$ is independent of the length of the vector of environmental randomness and of matrices $R_n \in V_{n,\ell}$.
 

\begin{proposition} \label{prop:indR} 
Let $n,\ell \in \Nbb, n\geq \ell$ and consider a problem G where $Z$ is decomposed as $Z=WR_n\t$ and $R_n\in V_{n,\ell}$. Then for fixed $Q,W$ the solution $\gamma_G$ is independent of $R_n$ and $n.$ 
\end{proposition}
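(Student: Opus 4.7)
The plan is to reduce the problem G to an equivalent problem on $\Real^\ell$ that depends only on $W$ and $Q$, by exploiting the spherical symmetry of the standard normal distribution. Since $R_n \in V_{n,\ell}$ satisfies $R_n\t R_n = I_\ell$, for $\zeta \sim \Nscr(0,I_n)$ the projected vector $\eta := R_n\t \zeta$ satisfies $\eta \sim \Nscr(0,I_\ell)$ with a law that does not depend on $R_n$ or on $n$.

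First I would exploit the decomposition $Z = W R_n\t$ to rewrite every quantity appearing in the cost and observations in terms of $\eta$. Partition $W$ conformally with the row blocks of $Z$ in \eqref{eq:Z}, i.e., write $W = [W_S\t, W_{H_1}\t, \ldots, W_{H_m}\t]\t$ with $W_S \in \Real^{m\times \ell}$ and $W_{H_i} \in \Real^{\ell_i \times \ell}$, so that $S = W_S R_n\t$ and $H_i = W_{H_i} R_n\t$. Then the observations become $z_i = H_i \zeta = W_{H_i} \eta$, the cross term satisfies $u\t S \zeta = u\t W_S \eta$, and $q(\zeta) = \tfrac12 \zeta\t S\t Q\inv S \zeta = \tfrac12 \eta\t W_S\t Q\inv W_S \eta$.

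Substituting these into the definition of $J(\gamma;\zeta)$, for any measurable $\gamma=(\gamma^1,\dots,\gamma^m)$ with $\gamma^i:\Real^{\ell_i}\to\Real$, the cost can be written
\[
J(\gamma;\zeta) \;=\; \Ebb_\eta\!\left[\tfrac12 \gamma(W_H\eta)\t Q\, \gamma(W_H\eta) + \gamma(W_H\eta)\t W_S \eta + \tfrac12 \eta\t W_S\t Q\inv W_S \eta \right],
\]
where $\gamma(W_H\eta)$ abbreviates $(\gamma^1(W_{H_1}\eta),\dots,\gamma^m(W_{H_m}\eta))$. Since $\eta \sim \Nscr(0,I_\ell)$ no matter how $R_n$ was chosen or what $n$ is, the entire optimization is equivalent to a team problem over $\Real^\ell$ whose data consists only of $Q$, $W$, and a standard Gaussian on $\Real^\ell$. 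By Radner's theorem this reduced problem admits a unique linear optimal strategy determined solely by $Q$ and $W$; this strategy must coincide with $\gamma_G$, completing the proof.

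The only real obstacle is ensuring that the identification above is truly lossless: the observation $z_i$ lives in $\Real^{\ell_i}$ regardless of $n$ (so the function space in which each $\gamma^i$ ranges is fixed), and the change of variables $\eta = R_n\t \zeta$ induces an injective map between strategies for G and strategies for the reduced problem. Both are immediate from the fact that $R_n$ has orthonormal columns, so the reduction is routine.
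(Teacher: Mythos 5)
Your proof is correct and follows essentially the same route as the paper: both arguments rest on the observation that problem G depends on $\zeta$ only through $Z\zeta = WR_n\t\zeta$, and that $R_n\t\zeta \sim \Nscr(0,I_\ell)$ for every $R_n \in V_{n,\ell}$ and every $n$, so the problem (and hence its solution) is determined by $Q$ and $W$ alone. Your version merely spells out the conformal partition of $W$ and the change of variables more explicitly than the paper's two-line argument.
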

\begin{proof}
G is determined by the distribution of $Z\zeta=W R_n\t \zeta.$ Since, $W$ is fixed, this further depends only on the distribution of $R_n\t \zeta$, which is distributed as $\Nscr(0,I_\ell)$ for any $R_n$ and any $n$. The result follows.
\end{proof}

Since $\gamma_G$ is independent of $R_n$ and $n$, when referring to problem to G we may equivalently take 
$R_n=I_\ell$ and hence $W=Z$. 

\subsection{Tight error bound} \label{sec:error} 

In this section we derive an asymptotically tight lower bound on $J(\gamma_n^*;\xi_n)$ which shows that $J(\gamma_n^*;\xi_n) \rightarrow J(\gamma_G;\zeta)$ almost surely. 
Denote, \[J(\gamma;\rho|A) := \Ebb[\I{A} L(\gamma^1(H_1\rho),\hdots, \gamma^m(H_m\rho),\rho)], \]
where $\I{A}$ is short hand for $\I{\rho \in A}$, which is a random variable that is unity  if $\rho \in A$ and $0$ otherwise. 
Let, $$A_n:= \{x \in \Real^\ell | \norm{x} \leq n^{c_4}\}.$$ Let $\zeta \sim \Nscr(0,I_\ell).$ The crux of this bound lies in completely exploiting that,
\[J(\gamma_G;\zeta) \geq J(\gamma_n^*;\xi_n ), \quad \forall n,\]
whereby $J(\gamma_G;\zeta)$ is a \textit{deterministic} and \textit{uniform} (with $n$) upper bound on $J(\gamma_n^*;\xi_n).$ To this end, define,
\begin{equation}
v_n :=\min_\gamma J(\gamma;\zeta|A_n), \qquad \gamma_n \in \arg \min_\gamma J(\gamma; \zeta|A_n). \label{eq:vndef} 
\end{equation}
By the same argument as in Proposition~\ref{prop:indR}, $v_n$ is also independent of $R \in V_{\ell,\ell}$. Hence we take $R=I_\ell$ when referring to $v_n$, whereby the argument of $\gamma_n^i$ is $H_i\zeta$~\footnote{As an alternative one may explicitly include $R \in V_{\ell,\ell}$ into the problem formulation and consider the quantity $\tilde{\gamma}_n \in \arg\min_\gamma J(\gamma;\tilde{\zeta}|A_n)$ where $\tilde{\zeta}:=R\t \zeta$. In this situation, the argument of $\tilde{\gamma}_n^i$ is $W_i\tilde{\zeta}$ where $W_i$ satisfies $H_i=W_iR\t$. But since $\zeta $ and $\tilde{\zeta}$ are both distributed as $\Nscr(0,I_\ell)$, we have $J(\gamma;\zeta|A_n)=J(\gamma;\tilde{\zeta}|A_n)$ and there is no loss of generality in considering $R=I_\ell$.}. 
We denote $$u_n^i := \gamma_n^i(H_i \zeta) \aur u_n := (u_n^1,\hdots,u_n^m).$$ $u_n^i$ is the control \textit{action} of player $i$ whereas $\gamma_n^i$ is the policy. 
$u_n^i$ is itself a random variable and is measurable with respect to the observations of player $i.$ Let $\Gscr_i$ be the $\sigma$-algebra generated by $H_i\zeta.$ Then for each $n,$ $u^i_n$ is a $\Gscr_i$-measurable random variable, for $i=1,\hdots,m$. 

While considering problem NG, we again consider an ensemble of static team problems with parameters $Q,W,\{R_n\},\{\xi_n\},$ where $\xi_n \in \Real^n$ is an isotropic random vector with log-concave density, $R_n \in V_{n,\ell}$ a uniformly random orthonormal matrix, independent of $\{\xi_k\}$ and $\gamma_n^*$, the solution of the $n\th$ problem in this ensemble.

We first show a fundamental error bound. Constants $C,c_1,c_2,c_3,c_4>0$ are from Theorem~\ref{thm:klartag}. 
\begin{proposition}[Error bound] Let $n,\ell \in \Nbb$ such that $n\geq \ell^{1/c_1}$ and $\frac{C}{n^{c_3}}<1$,  $\zeta \sim \Nscr(0,I_\ell)$ and consider an ensemble of static team problems as above.  
With probability at least $1-C\exp(-n^{c_2}) $, the ($R_n$-dependent quantity) $J(\gamma_n^*;\xi_n)$ satisfies (the deterministic bounds), \label{prop:bounds} 
\begin{equation}
J(\gamma_G;\zeta)\geq  J(\gamma^*_n;\xi_n) \geq v_n - \frac{\frac{C}{n^{c_3}}}{1-\frac{C}{n^{c_3}}}J(\gamma_G;\zeta).\label{eq:bounds} 
\end{equation}
\end{proposition} 
\begin{proof}
The upper bound follows from Propositions~\ref{prop:gammaLG} and~\ref{prop:indR}. 
For the lower bound, by Theorem~\ref{thm:klartag}, with probability at least $1-C\exp(-n^{c_2}) $,
\begin{equation}
f_{\hat{\xi}}(x) \geq \left (1- \frac{C}{n^{c_3}}\right ) f_{\zeta}(x), \label{eq:den1} 
\end{equation}
for all $x \in A_n$. Therefore using \eqref{eq:den1} (since $L\geq 0$), 
\begin{align*}
J(\gamma^*_n;\xi_n)  &\geq J(\gamma^*_n;\xi_n|A_n) \geq J(\gamma_n^*;\zeta|A_n) -\frac{C}{n^{c_3}}J(\gamma_n^*;\zeta|A_n), \label{eq:jgamma*} 
\end{align*}
By \eqref{eq:den1}, since $1-\frac{C}{n^{c_3}}$ is assumed positive, we get
\begin{align*}
J(\gamma_n^*;\xi_n) &\geq J(\gamma_n^*;\zeta|A_n) - \frac{\frac{C}{n^{c_3}}}{1-\frac{C}{n^{c_3}}} J(\gamma_n^*;\xi|A_n), \non\\
 & \geq v_n - \frac{\frac{C}{n^{c_3}}}{1-\frac{C}{n^{c_3}}} J(\gamma_G;\zeta), \non
\end{align*}
by definition of $v_n$ in \eqref{eq:vndef} and once more by Proposition~\ref{prop:gammaLG} and~\ref{prop:indR}.
\end{proof}

Consequent to Proposition~\ref{prop:bounds}, our aim for the rest of this section is to show that $v_n\rightarrow J(\gamma_G;\zeta)$. This will establish that the bounds in Proposition~\ref{prop:bounds} are asymptotically tight. But before we get on with this task, we note a meta-converse to the statement that the optimal controller for the Gaussian problem is independent of $R_n $ and $n$ (cf. Proposition~\ref{prop:indR}).

\begin{proposition}
Suppose we have that $\{R_n\}$-a.s., 
$\gamma^*_n$ is independent of $n$ and equal to $\gamma^*$ (say) for all $n$. Then $\gamma^*=\gamma_G$, $\{R_n\}$-a.s. \label{prop:indep} 
\end{proposition}
\begin{proof}
Let $\ell' \geq \ell$ and $\zeta \sim \Nscr(0,I_\ell).$ 
Arguing as in the proof of Lemma~\ref{prop:bounds}, we have that for all $n \geq \ell'^{1/c_1}$, with probability at least $1-C\exp(-n^{c_2}),$
\[J(\gamma_G;\zeta)\geq J(\gamma^*;\xi_n) \geq J(\gamma^*;\zeta|A_n) - \frac{\frac{C}{n^{c_3}}}{1-\frac{C}{n^{c_3}}}J(\gamma_G;\zeta).\]
Arguing as in the proof of Lemma~\ref{lem:pointwisedensity}, we get that the event 
\[J(\gamma_G;\zeta) \geq \limn J(\gamma^*;\xi_n) \geq \limn J(\gamma^*;\zeta|A_n) = J(\gamma^*;\zeta),\]
holds with probability at least $1-K\exp(-(\ell')^{c_2/c_1})$ (the last equality is from monotone convergence theorem).  Since $\ell'\geq \ell$ is arbitrary, we get that $J(\gamma_G;\zeta) \geq J(\gamma^*;\zeta),$ $\{R_n\}$-a.s. But since $\gamma_G$ is the unique solution of problem $G$, it follows that $\gamma^*=\gamma_G,$ $\{R_n\}$-a.s.
\end{proof}

We now come to showing $v_n \buildrel{n}\over \longrightarrow J(\gamma_G;\zeta).$ Our first observation is that $\{v_n\}$ is in fact convergent.

\begin{lemma} For all $n$, \label{lem:vncon} 
$J(\gamma_G;\zeta) \geq v_n \geq v_{n-1}. $
Hence 
$\left\{v_n\right\} $ is convergent. 
\end{lemma}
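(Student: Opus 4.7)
The plan is to prove the two inequalities separately and then invoke the monotone convergence theorem for real sequences to conclude convergence.

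First, for the upper bound $J(\gamma_G;\zeta) \geq v_n$: Since $L \geq 0$ by the construction of $q$ (cf.~\eqref{eq:l}), the integrand in $J(\gamma_G;\zeta|A_n)$ is dominated by the integrand in $J(\gamma_G;\zeta)$, so $J(\gamma_G;\zeta) \geq J(\gamma_G;\zeta|A_n)$. Combining this with the fact that $v_n$ is the infimum of $J(\cdot;\zeta|A_n)$ over all measurable $\gamma$ yields $J(\gamma_G;\zeta) \geq J(\gamma_G;\zeta|A_n) \geq v_n$.

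Second, for the monotonicity $v_n \geq v_{n-1}$: The sets $A_n = \{x : \|x\| \leq n^{c_4}\}$ are nested, $A_{n-1} \subseteq A_n$, because $c_4 > 0$. Hence $\I{A_{n-1}} \leq \I{A_n}$ pointwise and, using $L \geq 0$ again, $J(\gamma;\zeta|A_{n-1}) \leq J(\gamma;\zeta|A_n)$ for every measurable $\gamma$. Taking $\gamma = \gamma_n$ (a minimizer in the definition of $v_n$), we get $v_{n-1} \leq J(\gamma_n;\zeta|A_{n-1}) \leq J(\gamma_n;\zeta|A_n) = v_n$.

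Finally, the sequence $\{v_n\}$ is monotone nondecreasing and bounded above by $J(\gamma_G;\zeta)$, which is finite under the standing assumptions (the Gaussian problem admits a linear optimal solution with finite cost). Therefore $\{v_n\}$ converges. No significant obstacle is anticipated: the argument is a direct consequence of the nonnegativity of $L$, the nesting of $A_n$, and the definition of $v_n$ as an infimum.
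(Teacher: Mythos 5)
Your proof is correct and follows essentially the same route as the paper: nonnegativity of $L$, the nesting $A_{n-1}\subseteq A_n$, and the definition of $v_n$ as a minimum, followed by monotone bounded convergence. The only cosmetic difference is that the paper minimizes both sides of the pointwise inequality $L \geq \I{A_n}L \geq \I{A_{n-1}}L$ over $\gamma$ simultaneously, while you evaluate at the specific minimizers $\gamma_G$ and $\gamma_n$ -- the same argument in substance.
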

\begin{proof}
For each $\gamma,$ since $L(\cdot,\cdot) \geq 0$,
\begin{align*}
L(\gamma(\Hbf(\zeta)),\zeta)&\geq \I{A_n}L(\gamma(\Hbf(\zeta)),\zeta) \geq \I{A_{n-1}}L(\gamma(\Hbf(\zeta)),\zeta). 
\end{align*}
Taking expectations and minimizing each term with respect $\gamma$ gives that $J(\gamma_G;\zeta) \geq v_n \geq v_{n-1}$. Since $\{v_n\}$ is an increasing bounded sequence, it is convergent.
\end{proof}

With a slight abuse of notation, we write $J(u;\zeta)$ and $J(u;\zeta|A_n)$ to mean $J(\gamma;\zeta)$ and $J(\gamma;\zeta|A_n)$, respectively, with $u^i=\gamma^i(H_i\zeta)$ for all $i \in \Nscr.$

\begin{lemma} There is a subsequence $\{\I{A_{n_k}}u_{n_k}\}_k$ of $\{\I{A_n}u_n\}_n$ such that 
$\{\I{A_{n_k}}u_{n_k}^i\}_{k}$ converges weakly in $L^2(\Omega,\Fscr,P)$ for each $i=1,\hdots,m$. Here $\I{A_n} =\I{\zeta \in A_n}$, where $\zeta \sim \Nscr(0,I_\ell).$ \label{lem:subseq} 
\end{lemma}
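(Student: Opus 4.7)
The plan is to show that the sequence $\{\I{A_n}u_n^i\}_n$ is bounded in $L^2(\Omega,\Fscr,P)$ for each $i$; once this is established, the lemma follows from the weak sequential compactness of bounded sets in a Hilbert space, together with a standard diagonalization to obtain a single subsequence that works simultaneously for all $m$ components.

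The key observation for boundedness is the completion-of-squares rewriting of the cost. Since $q(\zeta)=\half\zeta\t S\t Q\inv S\zeta$, the integrand
\[L(u,\zeta)=\half u\t Q u+u\t S\zeta + q(\zeta)=\half(u+Q\inv S\zeta)\t Q (u+Q\inv S\zeta)\]
is a sum of squares. With $\lambda:=\lambda_{\min}(Q)>0$, this gives $L(u,\zeta)\geq \tfrac{\lambda}{2}\|u+Q\inv S\zeta\|^2$. Applying this pointwise to $u=u_n$ and taking expectations, together with Lemma~\ref{lem:vncon} (so that $v_n\leq J(\gamma_G;\zeta)$), yields
\[\Ebb\bigl[\I{A_n}\,\|u_n+Q\inv S\zeta\|^2\bigr]\;\leq\;\tfrac{2}{\lambda}\,v_n\;\leq\;\tfrac{2}{\lambda}\,J(\gamma_G;\zeta)\;<\;\infty.\]
Then the triangle inequality in $L^2(\Omega,\Fscr,P;\Real^m)$ gives
\[\|\I{A_n}u_n\|_{L^2}\;\leq\;\|\I{A_n}(u_n+Q\inv S\zeta)\|_{L^2}+\|\I{A_n}Q\inv S\zeta\|_{L^2}\;\leq\;C,\]
where the last term is bounded uniformly in $n$ because $\zeta\sim\Nscr(0,I_\ell)$ has finite second moments. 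In particular, for each $i=1,\hdots,m$, the scalar sequence $\{\I{A_n}u_n^i\}_n$ is bounded in $L^2(\Omega,\Fscr,P)$.

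Since $L^2(\Omega,\Fscr,P)$ is a Hilbert space, every bounded sequence has a weakly convergent subsequence (weak sequential compactness of closed balls). Extract a subsequence along which $\I{A_n}u_n^1$ converges weakly; pass to a further subsequence along which $\I{A_n}u_n^2$ converges weakly; and so on for $i=3,\hdots,m$. The diagonal subsequence $\{n_k\}$ then satisfies $\I{A_{n_k}}u_{n_k}^i\rightharpoonup \bar u^i$ in $L^2(\Omega,\Fscr,P)$ for every $i=1,\hdots,m$, which is the desired conclusion.

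The only nonroutine step is identifying the right $L^2$ bound; the completion of squares handles this cleanly and requires no additional hypotheses beyond $Q\succ0$ and the (already-available) uniform bound $v_n\leq J(\gamma_G;\zeta)<\infty$.
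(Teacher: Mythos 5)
Your proof is correct and follows essentially the same route as the paper: derive a uniform $L^2$ bound on $\I{A_n}u_n^i$ from $v_n \leq J(\gamma_G;\zeta)$ and $Q\succ 0$, then invoke weak sequential compactness of bounded sets in $L^2(\Omega,\Fscr,P)$ and extract subsequences $m$ times to handle all components. The only difference is cosmetic: where the paper lower-bounds $L$ by its quadratic-plus-linear part (leaving the Cauchy--Schwarz step needed for boundedness implicit), you complete the square using $q(\xi)=\half\xi\t S\t Q\inv S\xi$, which yields the same $L^2$ bound directly via the triangle inequality.
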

\begin{proof}
Notice that $J(u;\zeta|A_n)=J(\I{A_n}u;\zeta) - \Ebb[\I{A_n^c}q(\zeta)].$
Since $v_n \leq J(\gamma_G;\zeta)$, we have that 
\[\sum_{i =1}^m \Ebb[\lambda_{\min} (\I{A_n}u_n^i)^2 + \I{A_n}u_n^i S_i\t\zeta ] \leq J(\gamma_G;\zeta) \] 
for all $n$ where $\lambda_{\min}>0$ is a smallest eigenvalue of $Q$. Consequently, for each $=1,\hdots,m,$ $\{\I{A_n} u_n^i\}_n$ lies in a bounded subset of $L^2(\Omega,\Fscr,P).$ Thus, for $i=1$, there exists a subsequence that converges weakly. Taking a further subsequence, we find there is a subsequence that converges weakly for both $i=1 $ and $2$. Thus taking subsequences of subsequences $m$ times, we get that there is a subsequence of the original sequence that converges weakly for each $i=1,\hdots,m.$
\end{proof}

Let the weak limit of $\{\I{A_{n_k}}u_{n_k}^i\}$ be $\bar{u}^i.$ A crucial ingredient in the final proof is showing that the random variable $\bar{u}^i$ indeed a feasible action for the static team problem, \ie, showing that it is 
measurable with respect to $\Gscr_i.$
For showing this, we need a celebrated theorem from functional analysis due to Banach, Saks and Mazur. The version we need can be found in~\cite[Thm 5.13-1(c)]{ciarlet2013linear}.
\begin{theorem}[Banach-Saks-Mazur] 
Let $X$ be a normed linear space and let $\{x_k\}_k$ be a sequence in $X$ such that $x_k \rightarrow x$ weakly. Then, for each $n \in \Nbb$ there exists $n_0(n) \in \Nbb$ and scalars $\mu^n_k \geq 0, n \leq k\leq n+n_0(n)$ with $\sum_{k=n}^{n+n_0(n)} \mu^n_k=1$ such that 
\[z_n:= \sum_{k=n}^{n+n_0(n)} \mu^n_k x_k \rightarrow x,\]
strongly in $X$ as $n \rightarrow \infty.$
\end{theorem}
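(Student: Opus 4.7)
The plan is to reduce the theorem to Mazur's lemma, which asserts that for a convex subset $C$ of a normed linear space $X$, the weak closure of $C$ coincides with its norm closure. Granting this, for each $n \in \Nbb$ let $C_n := \conv\{x_k : k \geq n\}$. Since $x_k \to x$ weakly, every weak neighborhood of $x$ contains $x_k$ for all sufficiently large $k$, and so $x$ belongs to the weak closure of $\{x_k : k \geq n\}$, and therefore to the weak closure of $C_n$. By Mazur's lemma this equals the strong closure of $C_n$, so one can pick a finite convex combination $z_n \in C_n$ satisfying $\|z_n - x\|_X < 1/n$. Being a finite convex combination drawn from $\{x_k : k \geq n\}$, the element $z_n$ involves only finitely many indices; relabelling and padding with zero weights, we may arrange these indices to lie in $\{n, n+1, \ldots, n+n_0(n)\}$ for some $n_0(n) \in \Nbb$, with weights $\mu_k^n \geq 0$ summing to $1$. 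Since $\|z_n - x\|_X < 1/n$ for every $n$, we obtain $z_n \to x$ strongly, which is the desired conclusion.

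The real work therefore lies in Mazur's lemma, which I would derive from the geometric form of the Hahn--Banach theorem. The inclusion $\overline{C}^{\,\textrm{strong}} \subseteq \overline{C}^{\,\textrm{weak}}$ is immediate from the fact that norm convergence implies weak convergence. For the reverse, suppose $x \notin \overline{C}^{\,\textrm{strong}}$. Then $\{x\}$ is a compact convex set disjoint from the closed convex set $\overline{C}^{\,\textrm{strong}}$, so by geometric Hahn--Banach there exist a continuous linear functional $f \in X^\ast$ and a scalar $\alpha \in \Real$ with $f(x) > \alpha \geq f(y)$ for every $y \in \overline{C}^{\,\textrm{strong}}$. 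If $x$ were in $\overline{C}^{\,\textrm{weak}}$, there would be a net in $C$ converging weakly to $x$, forcing $f(x) \leq \alpha$, a contradiction. Hence $x \notin \overline{C}^{\,\textrm{weak}}$, establishing Mazur's lemma.

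The step that requires the most care is the invocation of Hahn--Banach in the proof of Mazur's lemma: one must ensure the separation is \emph{strict} on the side of the singleton $\{x\}$, which is precisely the content of the geometric separation theorem applied to a disjoint pair consisting of a compact convex set and a closed convex set in a normed space. Given this, the remaining steps --- exhibiting the convex combinations and extracting a strongly convergent tail --- are routine bookkeeping, with the index range $[n, n+n_0(n)]$ simply absorbing whichever finitely many summands participate with nonzero weight in $z_n$.
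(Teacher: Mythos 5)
Your proof is correct, and it is the standard argument: reduce to Mazur's lemma (the weak and norm closures of a convex set coincide, via geometric Hahn--Banach separation of the point from the norm closure), apply it to the tail convex hulls $C_n = \conv\{x_k : k \ge n\}$, and pad with zero weights to fit the index window $\{n,\hdots,n+n_0(n)\}$. Note, however, that the paper does not prove this theorem at all --- it is quoted as a classical result from Ciarlet [Thm 5.13-1(c)] --- so there is no in-paper proof to compare against; your write-up simply supplies the standard textbook derivation, and it is sound, including the one delicate point you flag (strict separation of the compact singleton $\{x\}$ from the disjoint closed convex set, which normed-space local convexity guarantees).
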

We now show the following lemma, which will be used to establish that $\bar{u}^i$ is $\Gscr_i$-measurable.
\begin{lemma} 
Let $(\Omega,\Fscr,P)$ be a probability space and let $\{f_n\}_n$ be a sequence in $L^2(\Omega,\Fscr,P)$. Suppose $\Gscr$ is a sub-$\sigma$-algebra of $\Fscr$ such that 
$f_n \in \sigma(\Gscr)$ for all $n$. Further, let $g_n =\I{A_n}$ where $A_n \uparrow \Omega$ pointwise as $n \rightarrow \infty$. If $f_ng_n \rightarrow h$ weakly in $L^2(\Omega,\Fscr,P),$ then  $h \in \sigma(\Gscr)$. \label{lem:phoemuex} 
\end{lemma}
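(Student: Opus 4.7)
My plan is to combine the Banach--Saks--Mazur theorem with the pointwise convergence $g_n \to 1$ that is built into the hypothesis $A_n \uparrow \Omega$. The core observation is that $L^2(\Omega,\Gscr,P)$ is a closed subspace of $L^2(\Omega,\Fscr,P)$, so it is weakly closed; the obstacle is only that the sequence $\{f_n g_n\}$ itself is not in that subspace (because the indicators $g_n$ need not be $\Gscr$-measurable). The idea is to transfer the a.s.\ convergence obtained from Mazur's convex combinations from $\{f_n g_n\}$ to $\{f_n\}$ using that $g_n(\omega) = 1$ eventually for each $\omega$.

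First, I would apply Banach--Saks--Mazur to the weakly convergent sequence $f_n g_n \to h$ to obtain convex combinations
\[
z_n \;=\; \sum_{k=n}^{n+n_0(n)} \mu^n_k\, f_k g_k
\]
that converge to $h$ strongly in $L^2(\Omega,\Fscr,P)$. Extracting a subsequence (still labelled $z_n$ for readability), I may assume $z_n \to h$ almost surely. In parallel, I would define
\[
y_n \;=\; \sum_{k=n}^{n+n_0(n)} \mu^n_k\, f_k ,
\]
which, being a finite convex combination of $\Gscr$-measurable functions, is itself $\Gscr$-measurable.

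Next, I would exploit that $A_n \uparrow \Omega$ pointwise: for every $\omega$ in a set of full probability, there exists a finite integer $N(\omega)$ such that $\omega \in A_k$ (hence $g_k(\omega)=1$) for all $k \geq N(\omega)$. Consequently, whenever $n \geq N(\omega)$, every index $k$ in the range $n \leq k \leq n+n_0(n)$ satisfies $g_k(\omega)=1$, and therefore $z_n(\omega) = y_n(\omega)$. Combining this with $z_n \to h$ almost surely along the chosen subsequence, I conclude $y_n \to h$ almost surely.

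Since each $y_n$ is $\Gscr$-measurable, the a.s.\ limit $h$ agrees almost surely with a $\Gscr$-measurable function, so $h \in \sigma(\Gscr)$ (up to a $P$-null set, which is absorbed in the usual completion convention). The main technical point requiring care is the extraction of the a.s.-convergent subsequence from the strongly convergent sequence provided by Banach--Saks--Mazur; everything else is bookkeeping. I do not foresee any difficulty beyond verifying that the window $[n,n+n_0(n)]$ in Mazur's construction is uniformly pushed to infinity so that the eventual equality $z_n(\omega)=y_n(\omega)$ kicks in for each fixed $\omega$.
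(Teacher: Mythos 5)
Your proposal is correct and follows essentially the same route as the paper's proof: apply Banach--Saks--Mazur to $\{f_ng_n\}$, pass to an a.s.-convergent subsequence of the convex combinations, and use $A_n \uparrow \Omega$ to replace $f_kg_k$ by $f_k$ pointwise for large indices, so the limit is an a.s.\ limit of $\Gscr$-measurable functions. The only cosmetic difference is that you name the auxiliary $\Gscr$-measurable combinations $y_n$ explicitly, which the paper leaves implicit.
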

\begin{proof}
Let $h_n=f_ng_n$. Since $\{h_n\}$ converges weakly, by the Banach-Saks-Mazur theorem, there exists a sequence $\{n_0(n)\}_{n \in \Nbb}$ and 
 (double) sequence of nonnegative scalars $\{\mu^n_k\}_{n \in \Nbb, n \leq k \leq n+n_0(n)}$ such that for each $n\in \Nbb,$ $\sum_{k=n}^{n+n_0(n)} \mu^n_k=1 $ and the sequence $\{\kappa_n\}$ where,
 \[\kappa_n=\sum_{k=n}^{n+n_0(n)} \mu^n_k f_kg_k,\]
 converges strongly in $L^2(\Omega,\Fscr,P)$ to $h$. Thus there exists a subsequence $\{\kappa_{n_r}\}_r$ such that $\kappa_{n_r} \buildrel r \over \rightarrow h$ almost surely. 
 For (almost every) $x \in \Omega$, there exists $r_0 \in \Nbb$ such that $g_{n_r}(x)=1$ for all $r \geq r_0.$ Thus for almost every $x$, 
\begin{equation}
h(x)=\lim_{r\rightarrow \infty} \kappa_{n_r}(x)= \lim_{r\rightarrow \infty} \sum_{k=n_r}^{n_r+n_0(n_r)} \mu^{n_r}_k f_k(x).
\end{equation}
The right hand side is the limit of a sequence of functions in $\sigma(\Gscr)$. It follows that $h \in \sigma(\Gscr).$
\end{proof}

We now have all pieces in place to show $v_n \rightarrow J(\gamma_G;\zeta).$
\begin{theorem}
Let $v_n$ be as defined in \eqref{eq:vndef} and $\zeta \sim \Nscr(0,I_\ell)$. Then, 
$v_n \rightarrow J(\gamma_G;\zeta)$ as $n\rightarrow \infty.$
\end{theorem}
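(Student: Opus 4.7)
The plan is to build a weak subsequential limit $\bar{u}$ of the optimal actions restricted to $A_n$, show $\bar{u}$ defines a feasible policy for the Gaussian team problem G via Lemma~\ref{lem:phoemuex}, and then sandwich $\lim v_n$ between $J(\gamma_G;\zeta)$ and $J(\bar u;\zeta)$ using weak lower semicontinuity of the quadratic cost together with optimality of $\gamma_G$.

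Setting $\tilde{u}_n^i := \I{A_n} u_n^i$, the identity $\I{A_n}^2 = \I{A_n}$ together with the quadratic form of $L$ gives
\[
J(\tilde{u}_n;\zeta) = v_n + \Ebb[\I{A_n^c}\, q(\zeta)],
\]
and since $q(\zeta) \in L^1$ and $A_n \uparrow \Real^\ell$ pointwise, dominated convergence shows the residual term vanishes as $n\to\infty$. Lemma~\ref{lem:subseq} then yields a subsequence $\{n_k\}$ along which $\tilde{u}_{n_k}^i \to \bar{u}^i$ weakly in $L^2(\Omega,\Fscr,P)$ for every $i=1,\dots,m$.

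Next I would invoke Lemma~\ref{lem:phoemuex} with $f_k = u_{n_k}^i \in \sigma(\Gscr_i)$, $g_k = \I{A_{n_k}}$, and $\Gscr = \Gscr_i$ to conclude that $\bar{u}^i$ is $\Gscr_i$-measurable; the hypothesis $A_n\uparrow\Omega$ is automatic because $\zeta$ is a.s.\ finite. By the Doob--Dynkin lemma, $\bar{u}^i = \hat{\gamma}^i(H_i\zeta)$ for some measurable $\hat{\gamma}^i$, so $\hat{\gamma}$ is a feasible policy for G. The functional $u\mapsto J(u;\zeta)$ on $L^2(\Omega)^m$ is weakly lower semicontinuous: its quadratic part is $\half\|Q^{1/2}u\|_{L^2}^2$ and its linear part is weakly continuous since $S\zeta \in L^2$. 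Hence
\[
J(\hat{\gamma};\zeta) = J(\bar{u};\zeta) \leq \liminf_k J(\tilde{u}_{n_k};\zeta) = \lim_n v_n,
\]
where the last equality uses convergence of $\{v_n\}$ from Lemma~\ref{lem:vncon}. Optimality of $\gamma_G$ gives $J(\hat{\gamma};\zeta)\geq J(\gamma_G;\zeta)$, while Lemma~\ref{lem:vncon} gives $v_n \leq J(\gamma_G;\zeta)$, and together these inequalities force $\lim v_n = J(\gamma_G;\zeta)$.

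The main obstacle is the measurability step. Although each $u_{n_k}^i$ lies in $\sigma(\Gscr_i)$, multiplication by $\I{A_{n_k}}$—which depends on the full vector $\zeta$, not only on $H_i\zeta$—destroys that structure, so one cannot directly read off $\bar{u}^i \in \sigma(\Gscr_i)$ from the weak limit. This is precisely the role of Lemma~\ref{lem:phoemuex}: Banach--Saks--Mazur replaces the weakly convergent sequence by strongly (and hence a.s.\ along a further subsequence) convergent convex combinations, and on the exhausting sets $\{g_{n_r}=1\}$ these combinations coincide with convex combinations of the $\Gscr_i$-measurable $f_k$, locking in measurability of the limit.
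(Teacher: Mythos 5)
Your proof is correct, and its skeleton coincides with the paper's: extract a weakly convergent subsequence of $\{\I{A_n}u_n^i\}$ via Lemma~\ref{lem:subseq}, use Lemma~\ref{lem:phoemuex} (Banach--Saks--Mazur) to show the weak limit $\bar{u}^i$ is $\Gscr_i$-measurable and hence feasible for G, and sandwich $\lim_n v_n$ between $J(\gamma_G;\zeta)$ and $J(\bar{u};\zeta)$ using Lemma~\ref{lem:vncon}. Where you genuinely diverge is the limit-passage step. The paper works with $v_n = J(u_n;\zeta|A_n)$ directly and lower-bounds it by the first-order convexity (gradient) inequality at $\bar{u}$, then disposes of the three resulting terms separately: the linear term $\Ebb[(Q\bar u+S\zeta)\t(\I{A_n}u_n-\bar u)]$ vanishes by weak convergence, the mismatch term $\Ebb[(Q\bar u+S\zeta)\t(\bar u-\I{A_n}\bar u)]$ by Cauchy--Schwarz and dominated convergence, and $J(\bar u;\zeta|A_n)\to J(\bar u;\zeta)$ by monotone convergence. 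You instead shift to the truncated actions $\tilde u_n=\I{A_n}u_n$, use the identity $J(\tilde u_n;\zeta)=v_n+\Ebb[\I{A_n^c}q(\zeta)]$ (which the paper also notes, inside the proof of Lemma~\ref{lem:subseq}) together with $q(\zeta)\in L^1$ to kill the residual, and then invoke weak sequential lower semicontinuity of $u\mapsto \half\Ebb[u\t Qu]+\Ebb[u\t S\zeta]+\Ebb[q(\zeta)]$ on $L^2(\Omega)^m$, which holds since the quadratic part is convex and strongly continuous and the linear part is a bounded linear functional ($S\zeta\in L^2$). Your route is more modular and avoids the term-by-term bookkeeping (no Cauchy--Schwarz or monotone-convergence step is needed); the paper's route is more elementary and self-contained in that it derives the lower semicontinuity ``by hand'' through the subgradient inequality, making explicit exactly which convergence theorem handles each term. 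Both rest on the same three supporting lemmas, so nothing in your argument requires hypotheses beyond those the paper already uses.
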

\begin{proof}
Let $u_n^i = \gamma_n^i(H_i \zeta).$ By Lemma~\ref{lem:subseq}, there exists a subsequence $\{u_{n_k}\}_k$ such that $u_{n_k}^i$ converges weakly for each $i=1,\hdots,m.$ Let this weak limit be $\bar{u}=(\bar{u}^1,\hdots,\bar{u}^m)$; this weak limit also lies in $L^2(\Omega,\Fscr,P).$ By Lemma~\ref{lem:vncon}, $\{v_n\}$ is convergent, whereby each subsequence of $\{v_n\}$ converges to the same limit. Thus it suffices to show that $\limk v_{n_k} =J(\gamma_G;\zeta).$ 


Now by convexity of $L$, we have for all $n$,
\begin{align}
v_n&\geq J(\bar{u};\zeta|A_n) + \Ebb[\I{A_n}(Q\bar{u}+S\zeta)\t(u_n -\bar{u})] \non\\
&= J(\bar{u};\zeta|A_n) + \Ebb [(Q\bar{u}+S\zeta)\t(\I{A_n}u_n -\bar{u}) ] \non \\ 
&\qquad+\Ebb[ (Q\bar{u}+S\zeta)\t(\bar{u}-\I{A_n}\bar{u}) ]\non \\ 
&\geq J(\bar{u};\zeta|A_n) + \Ebb[(Q\bar{u}+S\zeta)\t(\I{A_n}u_n -\bar{u})] \non \\ 
&\quad - \Ebb \left[\norm{Q\bar{u}+S\zeta}^2\right]^{1/2}  \Ebb\left[\norm{\I{A_n}\bar{u}-\bar{u}}^2 \right]^{1/2}, \label{eq:finalarg} 
\end{align}
where the last inequality follows from Cauchy-Schwartz inequality. 
Now, take limits on both sides along the above subsequence $\{n_k\}_k.$ The second term in the RHS of \eqref{eq:finalarg} is $\sum_{i=1}^m \Ebb[(Q\bar{u}+S\zeta)_i (\I{A_n}u_n^i -\bar{u}^i)].$
Since $\bar{u}^i$ is the weak limit of $\{\I{A_n}u_{n_k}^i\}_k$ for each $i$, this term vanishes as $k \rightarrow \infty.$ Since $\bar{u} \in L^2(\Omega,\Fscr,P),$ the third term in \eqref{eq:finalarg} is finite and by the dominated convergence theorem, this term vanishes as $k \rightarrow \infty.$ And by the monotone convergence theorem, 
$J(\bar{u};\zeta|A_{n_k}) \buildrel k\over \rightarrow J(\bar{u};\zeta).$ Consequently, 
\[\limn v_n = \limk v_{n_k} = J(\bar{u};\zeta).\]
But by Lemma~\ref{lem:phoemuex}, for each $i =1,\hdots,m,$ 
$\bar{u}^i$ is a measurable function of $H_i\zeta$ and is thus a feasible control action for problem G. Hence, $J(\bar{u};\zeta) \geq J(\gamma_G;\zeta).$ Finally, using Lemma~\ref{lem:vncon}, 
we get 
\[J(\gamma_G;\zeta) \geq \limn v_n \geq J(\bar{u};\zeta) \geq J(\gamma_G;\zeta),\]
whereby the result follows.
\end{proof}

We thus have our final theorem.
\begin{theorem}
Let $\ell \in \Nbb$ and let $\{\xi_n\}$ for $n \in \Nbb, n \geq \ell^{1/c_1}$ be a sequence of isotropic random vectors, such that for each $n$, $\xi_n $ is $\Real^n$-valued and  has a log-concave density with respect to the Lebesgue measure in $\Real^n.$ Let $\zeta \sim \Nscr(0,I_\ell)$ and consider an ensemble of static team problems with parameters $Q,W,\{R_n\},\{\xi_n\}$, where each $R_n \in \Real^{n \times \ell}$ and is chosen uniformly and independently from $V_{n,\ell}$ and independently of $\{\xi_k\}.$ Then, 
$\{R_n\}$-a.s., we have
\begin{equation}
\limn J(\gamma^*_n;\xi_n) = J(\gamma_G;\zeta).\label{eq:final} 
\end{equation}
\end{theorem}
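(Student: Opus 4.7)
The plan is to combine the deterministic sandwich bound in Proposition~\ref{lem:bounds} with the just-established convergence $v_n \to J(\gamma_G;\zeta)$, and then promote the ``with probability at least $1-C\exp(-n^{c_2})$'' guarantee to almost sure convergence via a Borel--Cantelli-type argument identical in spirit to the one used in the proof of Theorem~\ref{thm:pointwisedensity}.

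Concretely, for each $\ell' \geq \ell$ and each $n \geq (\ell')^{1/c_1}$ with $C/n^{c_3} < 1$, let $B_n$ denote the event on which $\hat{\xi}_n = R_n\t \xi_n$ satisfies \eqref{eq:tvmain}. On $B_n$, Proposition~\ref{lem:bounds} gives the deterministic two-sided bound
\[
J(\gamma_G;\zeta) \;\geq\; J(\gamma_n^*;\xi_n) \;\geq\; v_n - \frac{C/n^{c_3}}{1-C/n^{c_3}}\,J(\gamma_G;\zeta).
\]
Let $B = \bigcap_{n \geq (\ell')^{1/c_1}} B_n$. By Theorem~\ref{thm:klartag}, $P(B_n^c) \leq C\exp(-n^{c_2})$, so exactly as in the proof of Theorem~\ref{thm:pointwisedensity} one obtains $P(B) \geq 1 - K\exp(-(\ell')^{c_2/c_1})$.

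On the event $B$, passing to the limit in the displayed bound and using Lemma~\ref{lem:vncon} together with the previous theorem $v_n \to J(\gamma_G;\zeta)$ and the fact that $C/n^{c_3} \to 0$, yields
\[
J(\gamma_G;\zeta) \;\geq\; \limsup_{n\to\infty} J(\gamma_n^*;\xi_n) \;\geq\; \liminf_{n\to\infty} J(\gamma_n^*;\xi_n) \;\geq\; J(\gamma_G;\zeta),
\]
so $\lim_n J(\gamma_n^*;\xi_n) = J(\gamma_G;\zeta)$ on $B$. Thus the event $A := \{\lim_n J(\gamma_n^*;\xi_n) = J(\gamma_G;\zeta)\}$ satisfies $P(A) \geq P(B) \geq 1 - K\exp(-(\ell')^{c_2/c_1})$. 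Since $\ell' \geq \ell$ is arbitrary and $c_1,c_2 > 0$, taking the supremum over $\ell'$ forces $P(A) = 1$, establishing \eqref{eq:final}.

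In short, there is essentially no new work: the heavy lifting has already been done in Proposition~\ref{lem:bounds} (the deterministic sandwich bound on the random quantity $J(\gamma_n^*;\xi_n)$) and in the preceding theorem ($v_n \to J(\gamma_G;\zeta)$, whose proof required the Banach--Saks--Mazur construction to establish $\Gscr_i$-measurability of the weak limit of the optimal actions). The only step to carry out here is the almost sure upgrade, and the main (mild) obstacle is checking that the Borel--Cantelli-style summation of $\exp(-n^{c_2})$ can be made uniform in the starting index $(\ell')^{1/c_1}$, which is handled verbatim as in Theorem~\ref{thm:pointwisedensity}.
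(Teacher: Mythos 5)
Your proposal is correct and follows essentially the same route as the paper: the paper likewise combines the sandwich bound of Proposition~\ref{lem:bounds} with $v_n \to J(\gamma_G;\zeta)$ and then upgrades to an almost sure statement by ``arguing as in Proposition~\ref{prop:indep}'', which is exactly the intersection-of-events and arbitrary-$\ell'$ argument you spell out. Your write-up is simply a more explicit rendering of the paper's two-line proof.
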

\begin{proof}
By Proposition~\ref{prop:bounds}, \eqref{eq:bounds} holds with $\{R_n\}$-probability at least $1-C\exp(-n^{c_2}).$ Let $\ell'\geq \ell.$ 
Arguing as in Proposition~\ref{prop:indep}, with probability at least $1-K\exp(-(\ell')^{c_2/c_1}),$ \eqref{eq:final} holds. Since $\ell'$ is arbitrary, \eqref{eq:final} holds $\{R_n\}$-a.s.
\end{proof}

One may note that all assumptions of the LQG setup have been utilized in the proof. The linearity of the information and quadratic cost was used in showing $\gamma_L^*=\gamma_G,$ and the spherical symmetry of the standard normal distribution gave that the Gaussian cost did not depend on $n$. This yielded the crucial deterministic upper bound of $J(\gamma_G;\zeta)$ on $J(\gamma^*_n;\xi_n)$. The convexity of the cost function was used in lower bounding $v_n$, following which the quadratic nature of the cost was critical for the weak convergence-based argument. Finally, thanks to the static information structure, each term in $\{u_{n}^i\} $ was measurable with respect to $\Gscr_i$, which did not vary with $n$, thereby allowing us to apply Lemma~\ref{lem:phoemuex}. Indeed, looking back one finds that the delicate agreement of all assumptions is a fascinating and recurring theme in LQG theory. But this complete exploitation of assumptions also makes avenues for generalization of these results harder to find.

\subsection{An explicit bound}
The bound in Proposition~\ref{prop:bounds}, while tight is somewhat less explicit. 
We now show another bound that is explicit, but whose tightness we are not able to establish unconditionally. 

Suppose $Z$ is written as $Z=W R\t$.
Below and later in this paper we note that if $\rho \in \Real^n$ is a random vector,
\begin{align*}
J(\gamma;\rho) &= \int_{\Real^n} L(\gamma(\Hbf (x)),x)f_\rho(x)dx \\
&= \int_{\Real^\ell} L(\gamma(W z),Wz)f_{R\t \rho}(z)dz, \\
&=\int_{\range(W)}L(\gamma(w),w)f_{WR\t \rho}(w)dw,
\end{align*}
where $\gamma(\Hbf(x))\equiv (\gamma^1(H_1x),\hdots,\gamma^m(H_mx)).$ Further, since $Z=WR\t$, it follows that $\gamma$ is then a function of $W R\t\rho$ and the cost function $L(\gamma,x)\equiv L(\gamma,Wz)$ (by abuse of notation) where $z=R\t x.$ Thus, $J(\gamma;\rho)$ can be written in terms of the density of $R\t \rho$. Further, since the value of $\gamma$ and $L$ are determined by the value of $Wz$, 
$J(\gamma;\rho)$ can be written in terms of the density of $R\t \rho$ or of $WR\t \rho.$ This latter density is with respect the Lebesgue measure on the range of $W$, denoted `$dw$'. 
Similarly, $q(x)$ can also be written (by abuse of notation) as $q(Wz).$

Moreover, write $W\t = \left[ 
W_0\t ; 
W_1\t; \hdots W_m\t
 \right]  $ such that $S\t = RW_0\t $ and $
H_i\t =  RW_i\t, i\in \Nscr.  $ Similarly, let $w\in \range(W)$ be written as $w=(w_0,w_1,\hdots,w_m)$ where $w_i \in \range(W_i), i=0,1,\hdots,m.$ Recall that any controller $\gamma^i(w)$ is a function of $w_i$ alone. The proof of the theorem below is relegated to the Appendix. 
\begin{theorem}[Explicit bound] \label{thm:error} 
Let $\ell,n \in \Nbb,$ $n\geq \ell^{1/c_1}.$
Consider a static team problem with parameters $Q,W,R_n,\xi_n$ where $R_n \in V_{n,\ell}$ is uniformly distributed, and the environmental randomness $\xi_n \in \Real^n$ is isotropic with a log-concave density. Let $\gamma^*_n$ (with values viewed as column vectors in $\Real^m$) be an optimal controller for this problem and suppose the terms in matrices $\Ebb[\gamma^*_n(\Hbf(\zeta)) \gamma^*_n(\Hbf(\zeta))\t]$ and $\Ebb[\gamma^*_n(\Hbf(\zeta))\zeta\t]$, are finite where 
$\zeta \sim \Nscr(0,I_\ell)$.
Then, if $W$ is of rank $r$ and $n$ is such that $1- \frac{C}{n^3}-\tau_{\ell,r}(n^{c_4}) >0,$ we have with probability at least $1-C\exp(-n^{c_2}),$
\begin{align}
0\leq J(\gamma^*_L;\xi_n) - J(\gamma^*_n;\xi_n) &\leq \frac{\frac{C}{n^{c_3}}+\tau_{\ell,r}(n^{c_4})}{\left(1- \frac{C}{n^3}-\tau_{\ell,r}(n^{c_4})\right)}J(\gamma^*_G;\zeta)\non \\ 
+& \int_{A^c}\hspace{-1mm} L(\bar{\gamma}(w),w)f_{W\zeta}(w)dw.
\label{eq:error} 
\end{align}
Here $$A=\left \{Wx \left\lvert {\ds \max_{i=0,\hdots,m}}\norm{W_ix} \leq\frac{1}{\sqrt{m+1}} \half \sigma_{\min} n^{c_4}, x \in \Real^\ell \right. \right \}, $$ $C,c_1,c_2,c_3,c_4>0$ are absolute constants from Theorem~\ref{thm:klartag},  $\sigma_{\min}$ is the smallest positive singular value of $W$, $\tau$ is as defined in \eqref{eq:tau}, and for $i=1,\hdots,m,$ 
\[\bar{\gamma}^{i}(w_i) = \begin{cases}(\gamma^*_n)^i(w_i) & \eef \norm{w_i} \leq \frac{1}{\sqrt{m+1} }\half \sigma_{\min} n^{c_4}, \\
0 &\ow. 
\end{cases} \]
\end{theorem}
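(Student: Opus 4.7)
The plan is to bound $J(\gamma^*_L;\xi_n) - J(\gamma^*_n;\xi_n)$ by interpolating between the Gaussian and non-Gaussian costs, using Proposition~\ref{prop:gammaLG} to identify $J(\gamma^*_L;\xi_n)$ with the optimal Gaussian cost and Theorem~\ref{thm:marginals} to estimate the discrepancy of the relevant densities on the ``nice'' set $A$. The lower bound is trivial: $\gamma^*_L$ is only optimal among linear strategies, whereas $\gamma^*_n$ is the unrestricted minimizer for the problem with $\xi_n$, so $J(\gamma^*_L;\xi_n)\geq J(\gamma^*_n;\xi_n).$ For the upper bound, Proposition~\ref{prop:gammaLG} gives $J(\gamma^*_L;\xi_n) = J(\gamma_G;\zeta)$, and since $\gamma_G$ is the unrestricted minimizer for the Gaussian problem we have $J(\gamma_G;\zeta)\leq J(\bar{\gamma};\zeta)$ for the truncated strategy $\bar{\gamma}$ defined in the theorem. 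Hence
\[ J(\gamma^*_L;\xi_n) - J(\gamma^*_n;\xi_n)\leq J(\bar{\gamma};\zeta) - J(\gamma^*_n;\xi_n). \]

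I would then write both terms as integrals over $\range(W)$ against $f_{W\zeta}$ and $f_{WR_n\t\xi_n}$ respectively, and split the domain into $A$ and $A^c$. On $A$, the truncation is transparent ($\bar{\gamma}=\gamma^*_n$), so the integrands combine into $L(\gamma^*_n,w)[f_{W\zeta}(w) - f_{WR_n\t\xi_n}(w)]$. On $A^c$, the non-positive term $-L(\gamma^*_n,w)f_{WR_n\t\xi_n}(w)$ can be dropped, leaving $\int_{A^c}L(\bar{\gamma},w)f_{W\zeta}(w)\,dw.$ Next, Theorem~\ref{thm:klartag} applied to $\xi_n$ guarantees that, with $R_n$-probability at least $1-C\exp(-n^{c_2})$, the density of $R_n\t\xi_n$ satisfies \eqref{eq:b1} on the ball of radius $n^{c_4}$. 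The definition of $A$ is rigged so that $\max_i\norm{W_ix}\leq(1/\sqrt{m+1})\cdot\tfrac{1}{2}\sigma_{\min} n^{c_4}$ implies $\norm{Wx}^2=\sum_i\norm{W_ix}^2\leq(\tfrac{1}{2}\sigma_{\min} n^{c_4})^2$, which is exactly the range where Theorem~\ref{thm:marginals} applies. Thus on $A$, with $\epsilon:=C/n^{c_3}+\tau_{\ell,r}(n^{c_4})$,
\[ f_{W\zeta}(w) - f_{WR_n\t\xi_n}(w)\leq \epsilon\, f_{W\zeta}(w), \]
which yields
\[ J(\gamma^*_L;\xi_n) - J(\gamma^*_n;\xi_n)\leq \epsilon\int_A L(\gamma^*_n,w)f_{W\zeta}(w)\,dw + \int_{A^c}L(\bar{\gamma},w)f_{W\zeta}(w)\,dw. \]

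The crux is converting the first term on the right into a multiple of $J(\gamma_G;\zeta)$. I would invert the density estimate: on $A$, $f_{W\zeta}(w)\leq (1-\epsilon')\inv f_{WR_n\t\xi_n}(w)$, where $\epsilon'$ will be of the order $C/n^3+\tau_{\ell,r}(n^{c_4})$ (possibly derived via a sharper tail/concentration argument for log-concave $\xi_n$, since the one-sided inequality from Theorem~\ref{thm:marginals} must be complemented on $A$ itself). Then
\[ \int_A L(\gamma^*_n,w)f_{W\zeta}(w)\,dw\leq \frac{1}{1-\epsilon'}\int_A L(\gamma^*_n,w)f_{WR_n\t\xi_n}(w)\,dw\leq \frac{J(\gamma^*_n;\xi_n)}{1-\epsilon'}\leq \frac{J(\gamma_G;\zeta)}{1-\epsilon'}, \]
where the last two inequalities use optimality of $\gamma^*_n$ for $\xi_n$ and Proposition~\ref{prop:gammaLG} ($J(\gamma_G;\xi_n)=J(\gamma_G;\zeta)$). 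Substituting back yields the claimed bound.

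The main obstacle is the set $A^c$: Theorem~\ref{thm:marginals} says nothing there, and $\gamma^*_n$ itself is uncontrolled on large inputs, so integrating $L(\gamma^*_n,w)f_{W\zeta}(w)$ over $A^c$ could be large or divergent. The coordinate-wise truncation $\bar{\gamma}$ is introduced precisely to keep the $A^c$ integrand honest: each component of $\bar{\gamma}$ is either the value of $\gamma^*_n$ on a bounded input (hence bounded under mild regularity of $\gamma^*_n$) or zero, so the integral against the Gaussian tail $\int_{A^c} f_{W\zeta}$ is controlled. Secondary care is needed to make the probability of the Theorem~\ref{thm:klartag} event match the bound in the conclusion and to verify that $\epsilon'<1$ under the assumed smallness of $C/n^3+\tau_{\ell,r}(n^{c_4}).$
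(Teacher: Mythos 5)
Your proposal is correct and follows essentially the same route as the paper: the same comparison $\Delta_n \leq J(\bar{\gamma};\zeta)-J(\gamma^*_n;\xi_n)$, the same split over $A$ and $A^c$ with the nonpositive term dropped, the same application of Theorem~\ref{thm:marginals} on $A$, and the same final step $\int_A L(\gamma^*_n(w),w)f_{W\hat{\xi}_n}(w)dw \leq J(\gamma^*_n;\xi_n)\leq J(\gamma_G;\zeta)$. The one place you hedge, the inverted estimate $f_{W\zeta}(w)\leq (1-\epsilon)^{-1}f_{W\hat{\xi}_n}(w)$ on $A$, requires no sharper tail or concentration argument: it is just a rearrangement of the same one-sided inequality \eqref{eq:marginal} (valid once $1-\epsilon>0$), which is precisely how the paper obtains it.
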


Note that the RHS of \eqref{eq:error} is small for large $n$ if and only if the second term therein is small. This is ensured by the assumptions of the finiteness of $\Ebb[\gamma^*_n(\Hbf(\zeta)) \gamma^*_n(\Hbf(\zeta))\t]$ and $\Ebb[\gamma^*_n(\Hbf(\zeta))\zeta\t]$, whereby $J(\bar{\gamma};\zeta) <\infty.$ 
In that case, Theorem \ref{thm:error} says that the ratio of $J(\gamma^*_L;\xi_n)$ and $J(\gamma^*_n;\xi_n)$ is close to unity for `most' problems. 
If the second term in \eqref{eq:error} is large,  then \eqref{eq:error} is a weak bound since we know $J(\gamma^*_L;\xi_n) - J(\gamma^*_n;\xi_n) $ admits the (uniform) finite upper bounds 
$J(\gamma^*_L;\xi_n) - J(\gamma^*_n;\xi_n)  \leq J(\gamma^*_G;\zeta)\leq J(0;\zeta) =\Ebb[q(\zeta)]$.

\subsection{Pointwise convergence to a linear solution}

In our final result we establish a different mode of convergence. We show that if the sequence of solutions of an ensemble of team problems NG converges pointwise almost surely, then its limit is $\gamma_G$ almost surely (the probability referred to here is with respect to the distribution of $\{R_n\}$). 
\begin{theorem}
Let $\ell \in \Nbb$ and let $\{\xi_n\}$ for $n \in \Nbb, n \geq \ell^{1/c_1}$ be a sequence of isotropic random vectors, such that for each $n$, $\xi_n $ is $\Real^n$-valued and  has a log-concave density with respect to the Lebesgue measure in $\Real^n.$ Consider an ensemble of static team problems with parameters $Q,W,\{R_n\},\{\xi_n\}$, where each $R_n \in \Real^{n \times \ell}$ and is chosen uniformly and independently from $V_{n,\ell}$ and independent of $\{\xi_k\}.$ Let $\gamma^*_n$ be a solution of the $n\th$ problem in this ensemble. Suppose 
$\hat{\gamma}:=\lim_{n \rightarrow \infty} \gamma^*_n$ (pointwise) exists $\{R_n\}$-a.s. Then $\hat{\gamma}=\gamma_G$, $\{R_n\}$-a.s. 
\end{theorem}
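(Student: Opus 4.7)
The plan is to sandwich the limit of the optimal costs between an upper bound obtained from optimality and a lower bound obtained from Fatou's lemma, and then invoke uniqueness of the Gaussian optimum to conclude $\hat\gamma=\gamma_G$. Throughout, I would work on the $\{R_n\}$-full-probability event on which both Theorem \ref{thm:pointwisedensity} holds (so $f_{R_n\t \xi_n}\to \varphi_\ell$ pointwise on $\Real^\ell$) and $\gamma^*_n\to \hat\gamma$ pointwise holds (by hypothesis).

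First I would push the cost forward to $\Real^\ell$ as
\[
J(\gamma^*_n;\xi_n)=\int_{\Real^\ell} L\bigl(\gamma^*_n(Wz),Wz\bigr)\,f_{R_n\t \xi_n}(z)\,dz,
\]
using the identity noted just before the theorem. By construction of $q$, the integrand is nonnegative for every $z$ and every $n$. Since $L$ is continuous in both arguments, and both $\gamma^*_n(Wz)\to \hat\gamma(Wz)$ and $f_{R_n\t \xi_n}(z)\to \varphi_\ell(z)$ pointwise, the integrand converges pointwise to $L(\hat\gamma(Wz),Wz)\varphi_\ell(z)$. Fatou's lemma then yields
\[
J(\hat\gamma;\zeta)\;\le\;\liminf_n J(\gamma^*_n;\xi_n).
\]

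For the matching upper bound I would exploit optimality and linearity. Since $\gamma^*_n$ solves NG for the $n$th problem, $J(\gamma^*_n;\xi_n)\le J(\gamma_G;\xi_n)$; and because $\gamma_G$ is linear, $J(\gamma_G;\xi_n)$ depends only on the first two moments of $\xi_n$, which, by isotropy, match those of $\zeta$. So $J(\gamma_G;\xi_n)=J(\gamma_G;\zeta)$ for every $n$ (cf.\ Proposition \ref{prop:gammaLG}), giving
\[
\limsup_n J(\gamma^*_n;\xi_n)\;\le\;J(\gamma_G;\zeta).
\]
Combining the two bounds produces $J(\hat\gamma;\zeta)\le J(\gamma_G;\zeta)$, meaning $\hat\gamma$ is also an optimal strategy for the Gaussian team problem G. The classical Radner result for static LQG teams (which applies because $Q\succ 0$ and each $H_i$ has full row rank, mirroring the hypotheses used to establish uniqueness for NGL) says that G has a unique measurable optimal team strategy, hence $\hat\gamma=\gamma_G$ $\{R_n\}$-a.s.

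The main obstacle I expect is the Fatou step, which depends on having everything assembled on a single full-probability event and on $L\ge 0$ pointwise (not just in expectation). The first is routine since both the density convergence and the assumed pointwise convergence of $\gamma^*_n$ are defined on the common probability space carrying $\{R_n\}$; the second is built into the definition of $L$ via $q$. A subtle point that deserves a line of comment is the uniqueness of $\gamma_G$ among \emph{all} measurable strategies rather than merely linear ones; this is classical for static LQG teams under the stated conditions, but if one wished to avoid invoking it directly, one could observe that strict convexity of $u\mapsto \tfrac12 u\t Q u + u\t S\zeta$ combined with the full-rank assumption on the $H_i$ forces any $L^2(\varphi_\ell)$-optimizer to agree with $\gamma_G$ almost everywhere, and pointwise equality along the continuous linear $\gamma_G$ then transfers almost-everywhere equality of $\hat\gamma$ with $\gamma_G$ into the desired identity on the relevant domain.
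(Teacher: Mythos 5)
Your proposal is correct and follows essentially the same route as the paper's proof: push the cost to $\Real^\ell$, apply Fatou's lemma using the pointwise convergence of densities from Theorem~\ref{thm:pointwisedensity} together with the assumed pointwise convergence of $\gamma^*_n$, bound above via optimality of $\gamma^*_n$ and the moment-matching identities of Propositions~\ref{prop:gammaLG} and~\ref{prop:indR}, and conclude from uniqueness of the solution of G. Your added remark on how to justify uniqueness of $\gamma_G$ among all measurable strategies is a reasonable elaboration of a point the paper simply asserts, but it does not change the argument.
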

\begin{proof}
Let $\zeta \sim \Nscr(0,I_\ell)$ and $\hat{\xi}_n=R_n\t \xi_n.$
We have,
\begin{align*}
J(\hat{\gamma};\zeta) &= \int_{\Real^\ell} L(\hat{\gamma}(Wx),Wx)\varphi_\ell(x) dx \\ 
&\buildrel{(a)}\over= \int_{\Real^\ell}\liminf_{n \rightarrow \infty} L(\gamma_n^*(Wx),Wx)f_{\hat{\xi}_n}(x)dx 
\end{align*}
with probability 1. Here $(a)$ follows from the pointwise convergence showed in Lemma~\ref{lem:pointwisedensity} and the definition of $\hat{\gamma}.$ Thus, if $\zeta_n\sim \Nscr(0,I_n)$
\begin{align*}
J(\hat{\gamma};\zeta)&\buildrel{(b)}\over\leq \liminf_{n \rightarrow \infty}\int_{\Real^\ell} L(\gamma^*_n(Wx),Wx) f_{\hat{\xi}_n}(x) dx,\\
&=\liminf_{n \rightarrow \infty}J(\gamma_n^*;\xi_n), \\
& \buildrel{(c)}\over \leq \liminf_{n \rightarrow \infty} J(\gamma_G;\xi_n),\\
&\buildrel{(d)}\over=\liminf_{n \rightarrow \infty}J(\gamma_G;\zeta_n), \\ 
&\buildrel{(e)}\over= J(\gamma_G;\zeta),
\end{align*}
$(b)$ follows from Fatou's lemma, $(c)$ from the optimality of $\gamma^*_n$, $(d)$ from Proposition~\ref{prop:gammaLG} and $(e)$ from Proposition~\ref{prop:indR}. 
But since $\gamma_G$ is the unique solution of G, it follows that $\hat{\gamma}=\gamma_G,$ as required.
\end{proof}

\section{Some uniform convergence results} \label{sec:uniform} 
We end this paper by noting a few additional results. These results do not pertain to the team problem per se, but would perhaps be useful to other researchers working on similar themes. Hence we include them in this paper.
\subsection{Uniform convergence of tails}
Our first result is a uniform integrability-type result.
\begin{lemma} Let $\ell\in \Nbb$ and consider an ensemble of static team problems with parameters $Q,W,\{R_n\},\{\xi_n\}$
where $R_n \in V_{n,\ell}$ is uniformly distributed and the environmental randomness is given by a sequence of isotropic random vectors $\{\xi_n\}$ each with a log-concave density and independent of $R_n$. Let  $\hat{\xi}_n=R_n\t \xi_n$ and denote,
\[T(n,k):=\int_{x: \norm{x}>k}L(\gamma_n^*(Wx),Wx) f_{\hat{\xi}_n}(x)dx, \]
where $\gamma_n^*$ is the optimal controller for the $n\th$ problem in the ensemble. Let 
\[\delta_{n,k}:= T(n,k)-T(n,k-1),\]
and suppose there exists a function $g:\Nbb \rightarrow \Real $ such that $\delta_{n,k}\leq g(k) $ and $\sum_k g(k) <\infty.$ 
Then, for all $\epsilon>0$ there exists $k_0$ such that
\[\limn T(n,k)<\epsilon, \quad \forall k \geq k_0. \]
\end{lemma}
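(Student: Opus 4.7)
The plan is to combine a telescoping identity with the summability of $g$ to produce a tail bound on $T(n,k)$ that holds uniformly in $n$, from which the claim follows by pushing $k$ into the far tail.

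First I would observe that because $L\geq 0$ and $f_{\hat\xi_n}\geq 0$, the integrand in $T(n,k)$ is nonnegative, so $T(n,k)$ is monotonically nonincreasing in $k$; in particular $\delta_{n,k}\leq 0$. Moreover, $T(n,0)=J(\gamma_n^*;\xi_n)\leq J(\gamma_G;\zeta)<\infty$ by Proposition~\ref{lem:bounds}, so for each fixed $n$ the dominated convergence theorem applied to the integrand truncated by $\I{\norm{x}>k}$ gives $T(n,k)\downarrow 0$ as $k\to\infty$. Since $\delta_{n,k}$ is nonpositive, the hypothesis $\delta_{n,k}\leq g(k)$ is usable only through its summable companion on the positive decrement, so I would read it as $T(n,k-1)-T(n,k)=-\delta_{n,k}\leq g(k)$, which is the only interpretation under which the summability of $g$ does any work.

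Next I would telescope. For any $N>k$,
\begin{equation*}
T(n,k)-T(n,N)\;=\;\sum_{j=k+1}^{N}\bigl(T(n,j-1)-T(n,j)\bigr)\;\leq\;\sum_{j=k+1}^{N}g(j).
\end{equation*}
Letting $N\to\infty$ and invoking $T(n,N)\to 0$ yields
\begin{equation*}
T(n,k)\;\leq\;\sum_{j=k+1}^{\infty}g(j),\qquad\forall\,n\geq\ell^{1/c_1},
\end{equation*}
and the crucial point is that the right-hand side no longer depends on $n$, so this is a uniform tail estimate.

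Finally, since $\sum_{k}g(k)<\infty$, the tail $\sum_{j>k}g(j)\to 0$ as $k\to\infty$. Given $\epsilon>0$, choose $k_0$ so large that $\sum_{j>k_0}g(j)<\epsilon$. Then for all $k\geq k_0$ and all admissible $n$, $T(n,k)<\epsilon$, and taking $\limsup_n$ preserves the inequality, giving $\limsup_n T(n,k)\leq\sum_{j>k}g(j)<\epsilon$, which is the conclusion. The only mild obstacle is fixing the sign convention of the hypothesis; once one reads it as a bound on the decrement $T(n,k-1)-T(n,k)$, the rest is a standard summable-majorant argument and draws nothing further from the machinery developed earlier in the paper.
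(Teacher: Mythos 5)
Your proof is correct, but it takes a different route from the paper's. The paper treats $k\mapsto\delta_{n,k}$ as a sequence of functions on $\Nbb$ with counting measure and invokes the dominated convergence theorem with $g$ as the dominating function to justify exchanging $\lim_{n\to\infty}$ with the sum over $k$, i.e.\ to conclude $\lim_{k\to\infty}\lim_{n\to\infty}T(n,k)=\lim_{n\to\infty}\lim_{k\to\infty}T(n,k)=0$; this tacitly assumes the limits in $n$ exist and, exactly as you observe, only makes sense under the reading $\lvert\delta_{n,k}\rvert\le g(k)$ (equivalently a bound on the decrement $T(n,k-1)-T(n,k)$, since $\delta_{n,k}\le 0$), so your reinterpretation of the hypothesis is the same one the paper's own argument relies on. What your telescoping argument buys is a genuinely stronger and more elementary conclusion: the bound $T(n,k)\le\sum_{j>k}g(j)$ is uniform in $n$, so you need neither the existence of $\lim_{n\to\infty}T(n,k)$ nor of $\lim_{n\to\infty}\delta_{n,k}$, and the statement follows for $\sup_n$ (hence for $\limsup_n$ or the limit when it exists) directly from summability of $g$. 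Both proofs share the same ingredients at the boundary: nonnegativity of $L$ and the uniform finite bound $T(n,0)=J(\gamma_n^*;\xi_n)\le J(\gamma_G;\zeta)<\infty$, which give $T(n,k)\downarrow 0$ as $k\to\infty$ for each fixed $n$; the difference is only in how the uniformity in $n$ is extracted, via an interchange-of-limits theorem in the paper versus a summable-majorant tail estimate in your write-up.
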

\begin{proof}
It suffices to show that 
\[\limk \limn T(n,k)=0.\]
Since $J(\gamma_n^*;\xi_n) \leq J(\gamma_G;\zeta)<\infty,$  $\{R_n\}$-a.s., 
\[\limk T(n,k)=0, \quad \forall n,\]
whereby $\limn \limk T(n,k)=0.$
Consider $\Nbb$ endowed with counting measure $\nu$. Since $g \in L^1(\Nbb,\nu)$, the dominated convergence theorem gives
\[\int_\Nbb \limn \delta_{n,k}d\nu(k)=\limn \int_\Nbb \delta_{n,k}d\nu(k).\]
By definition,
\[ \limk \sum_{t=1}^k \limn \delta_{n,t}=\limn \limk \sum_{t=1}^k \delta_{n,t},\]
which simplifies to 
\[\limk \limn T(n,k) =\limn \limk T(n,k)=0,\]
as required.
\end{proof}

\subsection{Uniform convergence of densities}
Lemma~\ref{lem:pointwisedensity} showed the pointwise convergence of densities of projections of random vectors with log-concave densities. In this section, with additional assumptions we show the uniform convergence of such densities.

If $f(x)\equiv e^{-G(x)}$, where $G: \Real^n \rightarrow \Real$ is a convex function,  is a log-concave density, $G$ must necessarily approach infinity as $\norm{x} \rightarrow \infty.$ This implies that $G$ cannot grow sublinearly, for otherwise it would cease to be convex. Formalizing this observation leads to the following pointwise estimate from~\cite{cule2010theoretical}.

\begin{lemma}~\cite[Lemma 1]{cule2010theoretical}
Let $f$ be a log-concave density on $\Real^d$. Then there exist $a=a(f)>0$ and $b=b(f)\in \Real$ such that $f(x) \leq \exp(-a\norm{x}+b)$ for all $x\in \Real^d.$ \label{lem:cule} 
\end{lemma}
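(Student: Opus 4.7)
The plan hinges on two ingredients: convexity of $G := -\log f$, and the fact that $f$ integrates to $1$. After translating so that some point $x_0 = 0$ lies in the interior of the support of $f$ (such a point exists because the support of a log-concave density is convex with positive Lebesgue measure and hence has nonempty interior), I set $b_0 := G(0) < \infty$. The stated inequality for general $x_0$ then follows from the triangle inequality $\|y\| \leq \|y - x_0\| + \|x_0\|$, which allows me to absorb $a\|x_0\|$ into the constant $b$.

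The first step is to localize $f$. By Markov's inequality applied to the density $f$, the set $K := \{y : f(y) \geq f(0)/e\} = \{y : G(y) \leq b_0 + 1\}$ has Lebesgue measure at most $e/f(0) < \infty$. The set $K$ is convex (as a superlevel set of the concave function $\log f$) and closed (as a sublevel set of the lsc convex function $G$). A closed convex subset of $\Real^d$ with finite Lebesgue measure must be bounded — otherwise its recession cone would contain a ray which, together with its nonempty interior, forces a half-cylinder of infinite measure inside $K$. Hence $K \subseteq B(0,R)$ for some $R > 0$, and in particular $G(y) > b_0 + 1$ whenever $\|y\| > R$.

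The second step produces the linear growth of $G$ by chaining convexity along rays from the origin. For any $y$ with $\|y\| > 2R$, set $t := \|y\|/(2R) > 1$ and $x := y/t$; then $\|x\| = 2R > R$ so $G(x) > b_0 + 1$. Writing $x = \tfrac{1}{t}y + \bigl(1 - \tfrac{1}{t}\bigr) \cdot 0$ and applying convexity of $G$ gives
\[
G(y) \geq t\, G(x) - (t-1)\, G(0) \;=\; b_0 + t\bigl(G(x) - b_0\bigr) \;\geq\; b_0 + \frac{\|y\|}{2R}.
\]
For $\|y\| \leq 2R$, continuity of $G$ on the interior of its effective domain together with a standard convex-analytic extension makes $f$ bounded by some finite $M$ on $\overline{B(0,2R)}$, yielding $G(y) \geq -\log M$ there. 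Choosing $a := 1/(2R)$ and $b := \max\{1 + \log M,\, -b_0\}$ then gives $G(y) \geq a\|y\| - b$ uniformly in $y$, which is precisely $f(y) \leq \exp(-a\|y\| + b)$.

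The main delicate point is the boundedness of the superlevel set $K$: the implication ``convex closed set of finite Lebesgue measure in $\Real^d$ $\Rightarrow$ bounded'' requires a brief recession-cone argument but is the one step that is not a one-line invocation of a named inequality. Once it is in hand, the rest of the argument is just Markov's inequality, Jensen's inequality for the convex function $G$, and the local boundedness of convex functions on the interior of their domain.
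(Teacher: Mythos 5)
Your proof is correct. Note that the paper does not prove this lemma at all: it is quoted verbatim from Cule and Samworth \cite{cule2010theoretical} (their Lemma 1) purely as an imported tool, so there is no in-paper argument to compare against; your write-up is essentially the standard proof of that result (and close in spirit to the one in the cited reference): locate a superlevel set of $f$, observe via $\int f=1$ that it has finite Lebesgue measure and hence, being convex with nonempty interior, is bounded, and then let convexity of $G=-\log f$ propagate the resulting gap $G(x)>G(0)+1$ for $\norm{x}>R$ linearly along rays from the origin.

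Two small points you could tighten, neither of which is a real gap. First, closedness of $K$ is not needed (and an arbitrary version of a log-concave density need not have lsc $G$): either pass to the closure of $K$, which is convex, has the same Lebesgue measure, and contains $K$, or argue directly that an unbounded convex set containing a ball $B(z,r)$ contains, for points $y_n$ with $\norm{y_n}\to\infty$, convex hulls $\conv(B(z,r)\cup\{y_n\})$ of measure at least $c_d\, r^{d-1}\norm{y_n-z}\to\infty$. Second, your justification of the bound $f\leq M$ on $\overline{B(0,2R)}$ via ``continuity of $G$ on the interior of its effective domain together with a standard convex-analytic extension'' is vaguer than it needs to be, since $f$ could a priori misbehave near the boundary of its support inside that ball; the clean one-line fix is that $G$ is a proper convex function (it is $>-\infty$ everywhere because $f$ is finite, and finite at $0$), hence admits an affine minorant $G(y)\geq \langle c,y\rangle + d$, which bounds $G$ below, and so $f$ above, on every bounded set. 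With those substitutions the argument is complete and correct.
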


Let $\xi \in \Real^n$ have a log-concave density. Since log-concavity is preserved under affine transformations (see, \eg,~\cite{dharmadhikari1988unimodality}), the density of any projection, $\hat{\xi}=R\t \xi$, where $R \in V_{n,\ell}$  is also log-concave. Our interest is in deriving a \textit{uniform} estimate for the density $R\t \xi$ over all $R\in V_{n,\ell}$ \textit{and} over all $n$ for fixed $\ell$. The following lemma obtains this. Below, $\Gamma(\cdot)$ is the Gamma function.

\begin{lemma} \label{lem:subexp}  Suppose $\xi \in \Real^n$ is a random vector with log-concave density satisfying 
$$f_{\xi}(x) \leq \exp(-a\sqrt{2}\norm{x}+b_n) \quad \forall x\in \Real^n,$$ for some constants $a>0$ 
and $b_n$ satisfying,
\begin{equation}
\exp b_n \leq \exp b \cdot\left( \frac{a}{\sqrt{\pi}}\right)^{n-\ell} \frac{\Gamma((n-\ell)/2)}{2\Gamma(n-\ell)}, \label{eq:bn} 
\end{equation}
for some constant $b.$ Let $R \in V_{n,\ell} $  and let $\hat{\xi}=R\t \xi.$ Then, 
\begin{equation}
f_{\hat{\xi}}(x) \leq \exp\left(-a\norm{x}+b\right), \quad \forall x \in \Real^\ell.\label{eq:expbound} 
\end{equation}
\end{lemma}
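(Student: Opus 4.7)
The plan is to write the density $f_{\hat\xi}$ as a marginal of $f_\xi$, use the Pythagorean identity to separate the norm into a $y$-part and a $z$-part, apply a simple inequality to decouple them inside the exponential, and finally evaluate a radial integral in $\Real^{n-\ell}$. The condition \eqref{eq:bn} on $b_n$ should come out of the calculation on the nose.

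First I would write $P = [R,\bar R]$, where $\bar R \in V_{n,n-\ell}$ is chosen with columns orthogonal to those of $R$, so that $P$ is an orthogonal $n \times n$ matrix. Since $P$ has unit Jacobian, $W := P^\top \xi$ has density $f_W(w) = f_\xi(Pw)$, and the first $\ell$ coordinates of $W$ are precisely $R^\top \xi = \hat\xi$. Marginalising over the last $n - \ell$ coordinates gives, for every $y \in \Real^\ell$,
\begin{equation*}
f_{\hat\xi}(y) \;=\; \int_{\Real^{n-\ell}} f_\xi(Ry + \bar R z)\, dz.
\end{equation*}
Because $R$ and $\bar R$ have orthonormal columns orthogonal to each other, $\|Ry + \bar R z\|^2 = \|y\|^2 + \|z\|^2$.

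Next I would invoke the hypothesis $f_\xi(x) \leq \exp(-a\sqrt 2 \|x\| + b_n)$ and the elementary inequality $\sqrt{2}\sqrt{\|y\|^2 + \|z\|^2} \geq \|y\| + \|z\|$ (which is just $(\|y\|-\|z\|)^2 \geq 0$). This separates the integrand:
\begin{equation*}
f_{\hat\xi}(y) \;\leq\; e^{b_n - a\|y\|} \int_{\Real^{n-\ell}} e^{-a\|z\|}\, dz.
\end{equation*}
Switching to polar (spherical) coordinates in $\Real^{n-\ell}$ and using the standard surface-area formula $\mathrm{vol}(S^{n-\ell-1}) = 2\pi^{(n-\ell)/2}/\Gamma((n-\ell)/2)$ and $\int_0^\infty r^{n-\ell-1} e^{-ar}\, dr = \Gamma(n-\ell)/a^{n-\ell}$ gives
\begin{equation*}
\int_{\Real^{n-\ell}} e^{-a\|z\|}\, dz \;=\; \frac{2\pi^{(n-\ell)/2}\,\Gamma(n-\ell)}{\Gamma((n-\ell)/2)\,a^{n-\ell}}.
\end{equation*}

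Finally, I would plug this back in and use the hypothesis \eqref{eq:bn} on $b_n$, which rearranges exactly to
\begin{equation*}
e^{b_n}\cdot\frac{2\pi^{(n-\ell)/2}\,\Gamma(n-\ell)}{\Gamma((n-\ell)/2)\,a^{n-\ell}} \;\leq\; e^{b},
\end{equation*}
yielding the advertised bound $f_{\hat\xi}(y) \leq \exp(-a\|y\| + b)$ for every $y \in \Real^\ell$. The proof is essentially a bookkeeping exercise; the only step requiring any thought is noticing the inequality $\sqrt{2}\sqrt{s^2+t^2} \geq s+t$, which is what makes the dimension-dependent tail exponent $a\sqrt 2$ in the hypothesis collapse to the desired $a$ in the conclusion, and the main obstacle (if any) is simply matching the normalising constants so that the admissible growth rate of $b_n$ prescribed by \eqref{eq:bn} is exactly what the radial integral produces.
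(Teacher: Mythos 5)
Your proposal is correct and follows essentially the same route as the paper's proof: the same orthogonal completion $P=[R,\bar R]$ and marginalisation, the same elementary inequality $\sqrt{2}\sqrt{s^2+t^2}\geq s+t$ to decouple the norms, the same radial integral in $\Real^{n-\ell}$, and the same use of \eqref{eq:bn} to absorb the resulting constant. The only difference is cosmetic (you decouple before passing to polar coordinates, the paper after), and your constant bookkeeping, which yields $2\pi^{(n-\ell)/2}\Gamma(n-\ell)/(\Gamma((n-\ell)/2)a^{n-\ell})$, matches \eqref{eq:bn} exactly.
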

\begin{proof}
Let $P$ be an orthogonal matrix given by $P=[R\ \bar{R}]$ where $\bar{R} \in \Real^{n\times n-\ell}$ is an orthonormal matrix with columns orthogonal to $R$.  Thus $f_{\hat{\xi}}$ is the marginal density of the first $\ell$ components of $P\t\xi$. By the change of variables formula,
\begin{align*}
&f_{\hat{\xi}}(x)
=\int_{\bar{x} \in \Real^{n-\ell}} f_{P\t \xi}(x,\bar{x})d\bar{x}, \\
&= \int_{\bar{x} \in \Real^{n-\ell}}f_\xi(Rx+ \bar{R}\xbar)d\bar{x},\\
&\leq \int_{\Real^{n-\ell}} \exp(-a\sqrt{2}\sqrt{\norm{x}^2+\norm{\bar{x}}^2}+b_n)d\xbar, \\
&\buildrel{(a)}\over=\frac{2\pi^{(n-\ell)/2}}{\Gamma((n-\ell)/2)}\hspace{-1mm}\int_{r=0}^\infty \exp(-a\sqrt{2}\sqrt{\norm{x}^2+r^2}+b_n)r^{n-\ell-1}dr, \\
&\buildrel{(b)}\over\leq \frac{2\pi^{(n-\ell)/2}\exp(-a\norm{x}+b_n)}{\Gamma((n-\ell)/2)} \int_0^\infty \exp\left(-ar\right)r^{n-\ell-1}dr, \\
&\buildrel{(c)}\over=\frac{2(2\pi)^{(n-\ell)/2}\exp(-a\norm{x}+b_n)}{\Gamma((n-\ell)/2)} \frac{\Gamma(n-\ell)}{a^{n-\ell}},
\end{align*}
where $(a)$ follows from the surface area of the sphere in $\Real^{n-\ell}$ of radius $r$, $(b)$ uses that $\sqrt{p^2+q^2} \geq \frac{1}{\sqrt{2}}(p+q)$ for any $p,q>0$ (Jensen's inequality),    and $(c)$ follows from the definition of the Gamma function. Using \eqref{eq:bn}, the result follows.
\end{proof}

The proof of the above theorem reveals why $b_n$ must grow with $n$ even while $a$ can be constant. $b_n$ is the scaling that ensures that $f_{\xi_n}$ is a density for each $n.$ As $n$ grows, the scaling required changes, since the volume of  the $n$-dimensional ball changes. 

As a consequence of Lemma~\ref{lem:subexp}, 
we get that any $\ell$ dimensional projection of a sequence of log-concave distributed random vectors of increasing length satisfies the estimate \eqref{eq:expbound} provided the densities of the vectors satisfy a bound given by \eqref{eq:bn}. Theorem~\ref{thm:uniform} below shows that this implies that the densities of the projections converge uniformly to the standard Gaussian density.

\begin{theorem}  \label{thm:uniform}
Let $\ell \in \Nbb$ and let $\{\xi_n\}$ for $n \in \Nbb, n \geq \ell^{1/c_1}$ be a sequence of isotropic random vectors, such that for each $n$, $\xi_n $ is $\Real^n$-valued and  has a log-concave density with respect to the Lebesgue measure in $\Real^n.$ Further assume that there exist constants $a>0$ and a sequence $\{b_n\}$ satisfying \eqref{eq:bn} for some $b \in \Real$ such that 
\[f_{\xi_n}(x) \leq \exp(-a\norm{x}+b_n), \qquad \forall x \in \Real^n,\]
and all $n.$ 
Consider a sequence $\{R_n\}$ of orthogonal matrices, where each $R_n \in \Real^{n \times \ell}$ and is chosen uniformly and independently from $V_{n,\ell}$ and  let $\hat{\xi}_n=R_n\t \xi_n$ Then, 
\[\sup_{x\in \Real^\ell}| f_{\hat{\xi}_n}(x) -\varphi_\ell(x)| \leq \max \left\{ \frac{K'}{n^{c_3}}, \exp(-a'n^{c_4}+b')\right\},\]
with probability at least $1-C\exp(-n^{c_2})$ where $K'$ is a constant depending only on $\ell$ and $a',b'$ depend only $a,b,\ell$. 
Furthermore, $f_{\hat{\xi}_n} \rightarrow \varphi_\ell$ 
uniformly on $\Real^\ell$ with probability one. 
\end{theorem}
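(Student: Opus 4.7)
\begin{proofarg}[plan]
The plan is to split $\Real^\ell$ into the ball $B_n:=\{x : \norm{x}\leq n^{c_4}\}$ and its complement, estimate $|f_{\hat{\xi}_n}-\varphi_\ell|$ separately on each piece, and then combine the two estimates by taking a maximum. On $B_n$ the estimate from Theorem~\ref{thm:klartag} is available, whereas on $B_n^c$ both $f_{\hat{\xi}_n}$ and $\varphi_\ell$ have small tails; the tail control on $f_{\hat{\xi}_n}$ is exactly what Lemma~\ref{lem:subexp} provides, which is why the hypothesis \eqref{eq:bn} was imposed on $b_n$.

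First I would verify that the hypotheses of Lemma~\ref{lem:subexp} apply to each $\xi_n$ (with the constant $a$ rescaled so that the ``$a\sqrt 2$'' factor there matches our $a$), yielding the uniform tail bound $f_{\hat{\xi}_n}(x) \leq \exp(-a\norm{x}+b)$ for every $x\in\Real^\ell$. Then, on $B_n$, Theorem~\ref{thm:klartag} (in the $R^{\top}$ version discussed after its statement) gives, with $\{R_n\}$-probability at least $1-C\exp(-n^{c_2})$, the pointwise bound $|f_{\hat{\xi}_n}(x)-\varphi_\ell(x)|\leq \frac{C}{n^{c_3}}\varphi_\ell(x)\leq \frac{C}{n^{c_3}}(2\pi)^{-\ell/2}=\frac{K'}{n^{c_3}}$, where $K'$ depends only on $\ell$. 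On $B_n^c$ I would use the triangle inequality together with the two tail bounds: from Lemma~\ref{lem:subexp} we have $f_{\hat{\xi}_n}(x)\leq \exp(-an^{c_4}+b)$, and from the explicit form of $\varphi_\ell$ we have $\varphi_\ell(x)\leq (2\pi)^{-\ell/2}\exp(-n^{2c_4}/2)$; both decay much faster than any polynomial, so they can be absorbed into a single bound of the form $\exp(-a'n^{c_4}+b')$ with $a',b'$ depending only on $a,b,\ell$.

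Taking the supremum over $x\in\Real^\ell$ of the maximum of the two bounds yields the first claimed inequality on the same good event of probability at least $1-C\exp(-n^{c_2})$. For the almost-sure uniform convergence, I would mimic the Borel--Cantelli argument in the proof of Theorem~\ref{thm:pointwisedensity}: for any $\ell' \geq \ell$ let $B_n'$ be the event that the displayed sup-bound holds for index $n$; then $P(\cap_{n\geq (\ell')^{1/c_1}} B_n')\geq 1-\sum_{n\geq (\ell')^{1/c_1}}C\exp(-n^{c_2}) \geq 1-K\exp(-(\ell')^{c_2/c_1})$. Under this intersection event, both $K'/n^{c_3}$ and $\exp(-a'n^{c_4}+b')$ tend to zero, so $\sup_x |f_{\hat{\xi}_n}(x)-\varphi_\ell(x)|\to 0$. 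Letting $\ell'\to\infty$ shows that the uniform convergence holds with probability one.

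The main obstacle I expect is the region $B_n^c$: Theorem~\ref{thm:klartag} is silent there, so without Lemma~\ref{lem:subexp} one would have no handle on $f_{\hat{\xi}_n}$ outside a large compact set. The delicate point is that the exponential bound on $f_{\xi_n}$ in $\Real^n$ does not automatically descend to a dimension-free bound on the marginal $f_{\hat{\xi}_n}$ in $\Real^\ell$ --- the normalizing constant $b_n$ must inflate with $n$ at the precise rate \eqref{eq:bn} to compensate for integration over the $(n-\ell)$-dimensional orthogonal complement. Once that dimension-free tail bound is in hand, the rest is bookkeeping with the max of two decaying quantities and a standard summable-tails argument to upgrade ``with high probability'' to ``almost surely.''
\end{proofarg}
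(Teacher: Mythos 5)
Your proposal is correct and follows essentially the same route as the paper's proof: Theorem~\ref{thm:klartag} on the ball $\{\norm{x}\leq n^{c_4}\}$ giving the $K'/n^{c_3}$ bound with $K'=C(2\pi)^{-\ell/2}$, Lemma~\ref{lem:subexp} (with the $\sqrt{2}$ rescaling of $a$) plus the Gaussian tail on the complement giving $\exp(-a'n^{c_4}+b')$, and the summable-tails intersection argument from Theorem~\ref{thm:pointwisedensity} to upgrade to almost-sure uniform convergence. No substantive differences to report.
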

\begin{proof}
Let $B_n$ be the event that $\hat{\xi}_n$ satisfies \eqref{eq:tvmain}. Under this event, by Theorem~\ref{thm:klartag}, for $n\geq \ell^{1/c_1},$
$$\sup_{x: \norm{x} \leq n^{c_4}} |f_{\hat{\xi}_n}(x) - \varphi_\ell(x)| \leq \frac{C}{n^{c_3}}.\sup_z\varphi_\ell(z),$$
with probability at least $1-C\exp(-n^{c_2}).$
Set $K'=K'(\ell) =C \sup_z\varphi_\ell(z)=\frac{C}{(2\pi)^{\ell/2}}$. 
Recall that by Lemma~\ref{lem:subexp}, $f_{\hat{\xi}_n}(x) \leq \exp (-\frac{a}{\sqrt{2}}\norm{x}+b)$ for all $x\in \Real^\ell$ and for all $n.$ Meanwhile, $\varphi_\ell(x)=\exp(-\norm{x}^2/2-\ell \ln (2\pi)/2).$ Therefore, for $\norm{x}>1,$ one can find $a',b'$ depending only on $a,b,\ell$ such that $f_{\hat{\xi}_n}(x)\leq e^{-a'\norm{x}+b'}$ and $\varphi_\ell(x) \leq e^{-a'\norm{x}+b'}$ and hence, 
$$|f_{\hat{\xi}_n}(x) - \varphi_\ell(x)|   \leq e^{-a'\norm{x}+b'},$$ for all $x$ such that $\norm{x}>1.$ Applying this bound for $\norm{x}>n^{c_4}$ and with the earlier bounds gives the first result.

Let $\ell'\geq \ell$. $f_{\hat{\xi}_n} \rightarrow \varphi_\ell$ uniformly under the event $B=\cap_{n\geq (\ell')^{1/c_1}} B_n$. Following the proof of Lemma~\ref{lem:pointwisedensity}, $B$ holds with probability at least $1-K\exp(-(\ell')^{c_2/c_1}).$ Letting $\ell' \rightarrow \infty,$ we get that $f_{\hat{\xi}_n} \rightarrow \varphi_\ell$ uniformly $\{R_n\}$-a.s.
\end{proof}

\section{Conclusion} \label{sec:conc} 
We considered stochastic static team problems with quadratic cost and linear information but with noise vectors that have densities that are not necessarily Gaussian. These problems have been studied extensively in the case where noise is Gaussian, where it is known that they admit linear optimal controllers. We considered  noise vectors to be with log-concave densities and established that for most problems of such a kind, linear strategies are near-optimal. Further, if the optimal strategies converge pointwise as the noise vector grows in length, they do so to a linear strategy. We derived an asymptotically tight bound on the difference between the optimal cost and the cost under the best linear strategy. 
Our results were propelled by a recent central limit theorem of Eldan and Klartag~\cite{eldan2008pointwise}. 
Additionally, we derived subsidiary results on uniform convergence of projections of random vectors with log-concave densities and on uniform convergence of tails of optimal costs with non-Gaussian noise.



\section*{Acknowledgment}
The author would like to thank Prof Vivek S. Borkar who introduced him to the work of Eldan and Klartag, Prof Jayakrishnan Nair and Prof Abhishek Gupta for some helpful discussions and user PhoemueX~\footnote{\url{http://math.stackexchange.com/users/151552/phoemuex}}  on StackExchange.com for the proof idea for Lemma~\ref{lem:phoemuex}~\cite{SEweakly}. He would also like to thank Prof Bo'az Klartag for clarifying some aspects of his result. This work supported in part by a grant of the Science and Engineering Research Board, Department of Science and Technology, Government of India. The author would also like to thank the Senior Editor and the anonymous reviewers for their comments.

\begin{appendix}
\subsection{Proof of Lemma~\ref{lem:pointwisedensity}}
\begin{proof}
Let $\ell' \geq \ell.$
For each $n \geq (\ell')^{1/c_1}$, let $B_n$ be the event that, $
B_n:= \left\{\hat{\xi}_n \ {\rm satisfies\ \eqref{eq:tvmain}} \right\}$.
We claim that the event $A:=\left\{ \limn f_{\hat{\xi}_n}(x) = \varphi_\ell(x)\  \forall \ x \in \Real^\ell\right\}  $ is implied by the event $B:=\cap_{n \geq (\ell')^{1/c_1}} B_n$. Consider an $x\in \Real^\ell$. Then under the event $B$, 
\[|f_{\hat{\xi}_n}(x) - \varphi_\ell(x)| \leq \frac{C}{n^{c_3}}, \quad \forall n \geq \norm{x}^{1/c_4},\]
and hence $f_{\hat{\xi}_n}(x) \rightarrow \varphi_\ell(x)$. However, since this holds for every $x\in \Real^\ell$, it follows that under event $B$, $f_{\hat{\xi}_n} \rightarrow \varphi_\ell$ pointwise.
Therefore $P(A) \geq P(B) \geq 1-\sum_{n \geq (\ell')^{1/c_1}} P(B_n^c).$ Now,
\begin{align*}
\sum_{n \geq (\ell')^{1/c_1}} P(B_n^c) &\leq C\sum_{n\geq (\ell')^{1/c_1}} \exp(-n^{c_2}) \\
& \leq C\sum_{n\geq (\ell')^{c_2/c_1}} \exp(-n), 
\end{align*}
and the RHS evalutates to $K\exp(-(\ell')^{c_2/c_1})$ with $K=\frac{C}{1-e\inv}$. Notice that this is true for any $\ell' \geq \ell$. It follows that \[P(A) \geq \sup_{\ell'} \{1 -K\exp(-(\ell')^{c_2/c_1})\} =1,\]
where the last equality follows since $c_2,c_1>0$. 
This completes the proof.
\end{proof}
\subsection{Proof of Lemma~\ref{lem:marginals}}
\begin{proof}
If $W$ is nonsingular (\ie, $\ell=r$), then, $\norm{Wx} \leq \half \sigma_{\min} n^{c_4}$ implies $\norm{x} \leq \half n^{c_4}$. Further, $f_{W\hat{\xi}}(Wx) \equiv \frac{1}{|\det(W)|}f_{\hat{\xi}}(x)$ and $f_{W\zeta}(Wx)\equiv \frac{1}{|\det(W)|}f_\zeta(x)=\frac{1}{|\det(W)|}\varphi_\ell(x),$ whereby the result holds by a direct application of \eqref{eq:b1}.

Now assume $W$ is singular and of rank $r$. Consider the singular value decomposition of $W$, 
$W=U\Sigma V\t$ where $U,V \in V_{\ell,\ell}$, 
$\Sigma = \pmat{\Sigma_1 & 0 \\ 0& 0}$ and $\Sigma_1$ is a $r\times r$ diagonal matrix with positive diagonal entries (the smallest of which is $\sigma_{\min}$). Let $V=[V_1; V_2]$ where $V_1 \in \Real^{\ell \times r}.$ It is easy to check that for any $x \in \Real^\ell$, $\norm{Wx}=\norm{\Sigma_1 V_1\t x} $ (the latter norm is on $\Real^r$). Consequently, $\norm{Wx}\leq \half \sigma_{\min} n^{c_4}$ implies that $\norm{V_1\t x} \leq \half n^{c_4}.$

Note that for any $x\in \Real^\ell,$
\begin{align}
f_{W\zeta}(Wx) &
=f_{\Sigma V\t \zeta}(\Sigma V\t x) 
=f_{\Sigma V\t \zeta}(\Sigma_1 V_1\t x,0)\non \\ &= \frac{ f_{V_1\t \zeta}( V_1\t x)}{|\det(\Sigma_1)|}, \label{eq:fw1} 
\end{align}
and similarly, 
\begin{equation}
f_{W\hat{\xi}}(Wx)
= \frac{1}{|\det(\Sigma_1)|} f_{V_1\t \hat{\xi}}( V_1\t x).\label{eq:fw2} 
\end{equation}
Since $V$ is orthogonal, for any $z\in \Real^r,$ we have
\begin{align}
f_{V_1\t \hat{\xi}}(z) &= \int_{\Real^{\ell-r}} f_{V\t \hat{\xi}}(z,\bar{z})d\bar{z} = \int_{\Real^{\ell-r}} f_{\hat{\xi}}(V(z,\bar{z}))d\bar{z}, \non \\
f_{V_1\t \zeta}(z) &= \int_{\Real^{\ell-r}} f_{V\t \zeta}(z,\bar{z})d\bar{z} = \int_{\Real^{\ell-r}} \varphi_\ell(V(z,\bar{z}))d\bar{z} \non \\
&=\int_{\Real^{\ell-r}}\varphi_\ell(z,\bar{z})d\bar{z}= \varphi_r(z), \label{eq:temp4} 
\end{align}
where the last relation follows from the spherical symmetry of the normal distribution. For $z\in \Real^r$, let $A_1=A_1(z):=\{\bar{z} \in \Real^{\ell -r}:\norm{z,\bar{z}} \leq n^{c_4}\}$ and 
$A_2=A_2(z):=\{\bar{z} \in \Real^{\ell -r}:\norm{z,\bar{z}} >n^{c_4}\}$. 
For any $z\in \Real^r$, we get, 
\begin{align}
f_{V_1\t \zeta}(z) -f_{V_1\t \hat{\xi}}(z)  &\buildrel{(a)}\over \leq \int_{\bar{z}\in A_1}\hspace{-3mm}\left( \varphi_\ell(V(z,\bar{z})) -f_{\hat{\xi}}(V(z,\bar{z})) \right) d\bar{z} \non \\
&\pushright{+ \int_{\bar{z}\in A_2} \varphi_\ell(V(z,\bar{z}))d\bar{z}} \non \\
&\buildrel{(b)}\over\leq \frac{C}{n^{c_3}}\int_{\bar{z}\in A_1} \varphi_\ell(V(z,\bar{z}))d\bar{z}\non \\
&\hspace{1cm}+ \int_{\bar{z} \in A_2}\varphi_\ell(V(z,\bar{z}))d\bar{z} \label{eq:temp3} 
\end{align}
where $(a)$ is obtained by using that $-\int_{\bar{z} \in A_2} f_{\hat{\xi}}(V(z,\bar{z}))d\bar{z} \leq 0 $ and $(b)$ follows from \eqref{eq:b1}. Now, if $\norm{z} \leq \half n^{c_4},$ we get $A_2(z) \subseteq A_2':=\{\bar{z} \in \Real^{\ell-r} | \norm{\bar{z}}^2>\frac{3}{4}n^{2c_4}\}$. Therefore, the second term in \eqref{eq:temp3} is at most,
$\varphi_r(z)\int_{\bar{z} \in A_2'} \varphi_{\ell-r}(\bar{z})d\bar{z}
 = \tau_{\ell,r}(n^{c_4})\varphi_r(z).$
The first term in \eqref{eq:temp3} is at most $\frac{C}{n^{c_3}}\int_{\Real^{\ell-r}}\varphi_\ell(z,\bar{z})d\bar{z} =\frac{C}{n^{c_3}}\varphi_r(z).$ 
Thus, combining this with \eqref{eq:temp3}, \eqref{eq:temp4}, \eqref{eq:fw1}, \eqref{eq:fw2}, the result follows.
%
\end{proof}

\subsection{Proof of Theorem~\ref{thm:error}}
\begin{proof}
Let $n\geq \ell^{1/c_1}$ and let $\Delta_n:= J(\gamma^*_L;\xi_n)-J(\gamma^*_n;\xi_n).$ We have $\Delta_n \geq 0,$ since $\gamma^*_n$ is the optimal controller. Since $J(\gamma^*_G;\zeta)=J(\gamma^*_L;\xi_n)$ (cf. Propositions~\ref{prop:gammaLG} and \ref{prop:indR}) we get
\begin{align*}
\Delta_n&=J(\gamma_G^*;\zeta)-J(\gamma^*_n;\xi_n) 
\leq J(\gamma';\zeta)-J(\gamma^*_n;\xi_n), 
\end{align*}
for any measurable function $\gamma'.$  Let $A,\bar{\gamma}$ be as defined in the theorem and take $\gamma'=\bar{\gamma}.$ 
Therefore, $\gamma'(w)=\gamma^*_n(w)$ for all $w \in A$. Hence,
\begin{align}
&\hspace{-1mm}\Delta_n \leq 
\int_{A}\hspace{-1mm} \left( L(\gamma'(w),w)
f_{W\zeta}(w) - L(\gamma^*_n(w),w)f_{W\hat{\xi}_n}(w) \right) dw \non \\
&+ \int_{A^c}\left( L(\gamma'(w),w)
f_{W\zeta}(w) - L(\gamma^*_n(w),w)f_{W\hat{\xi}_n}(w) \right)dw, \non \\  
&\leq \int_{A} L(\gamma_n^*(w),w) 
\left(f_{W\zeta}(w) - f_{W\hat{\xi}_n}(w) \right) dw \non \\
&\qquad+ \int_{A^c}\hspace{-1mm}L(\bar{\gamma}(w),w)f_{W\zeta}(w)dw
\label{eq:temp} 
\end{align}
where 
`$dw$' denotes the Lebesgue measure on the range of $W$ and we have used that 
$-\int_{A^c}L(\gamma^*_n(w),w)f_{W\hat{\xi}_n}(w)dw \leq 0,$
(recall that $L(\cdot,\cdot) \geq 0$). 
For the first term in \eqref{eq:temp}, consider the event $B_n$ that $\hat{\xi}_n$ satisfies \eqref{eq:tvmain}. Applying Theorem~\ref{thm:klartag}, we get that with probability at least $1-C\exp(-n^{c_2})$,  
\eqref{eq:b1} holds with $\hat{\xi}=\hat{\xi}_n$. Then since $w \in A \implies \norm{w} \leq \half \sigma_{\min}n^{c_4},$  by Lemma~\ref{lem:marginals}, we have that with probability at least $1-C\exp(-n^{c_2}),$
\begin{align}
0\leq \Delta_n &\leq 
\left(\frac{C}{n^{c_3}} +\tau_{\ell,r}(n^{c_4})\right) \int_{A} L(\gamma_n^*(w),w)
f_{W\zeta}(w) dw \non \\
&+\int_{A^c}L(\bar{\gamma}(w),w)f_{W\zeta}(w)dw
\label{eq:temp2} 
\end{align}
Notice that the first term in \eqref{eq:temp2}  equals
\begin{align*}
& \left(\frac{C}{n^{c_3}}+\tau_{\ell,r}(n^{c_4})\right) \int_{A} L(\gamma_n^*(w),w)
\frac{f_{W\zeta}(w)}{f_{W\hat{\xi}_n}(w)}f_{W\hat{\xi}_n}(w) dw, 
\end{align*}
which, if the event $B_n$ holds and 
 if $1- \frac{C}{n^3}-\tau_{\ell,r}(n^{c_4})>0$, is, by \eqref{eq:marginal}, 
at most 
\begin{align*} 
\frac{\frac{C}{n^{c_3}}+\tau_{\ell,r}(n^{c_4})}{\left(1- \frac{C}{n^3}-\tau_{\ell,r}(n^{c_4})\right)} &\int_{A} L(\gamma_n^*(w),w)
f_{W\hat{\xi}_n}(w) dw \\
&\leq \frac{\frac{C}{n^{c_3}}+\tau_{\ell,r}(n^{c_4})}{\left(1- \frac{C}{n^3}-\tau_{\ell,r}(n^{c_4})\right)}J(\gamma^*_G;\zeta).
\end{align*}
The last inequality follows from the nonnegativity of $L$ whereby $\int_{A} L(\gamma_n^*(w),w)
f_{W\hat{\xi}_n}(w) dw \leq J(\gamma^*_n;\xi_n)\leq J(\gamma^*_G;\xi_n)=J(\gamma^*_G;\zeta).$ This completes the proof.
\end{proof}

\end{appendix}



%

\bibliographystyle{IEEEtran}
\bibliography{../Mylatexfiles/ref.bib}

\begin{IEEEbiography}[{\includegraphics[width=1in]{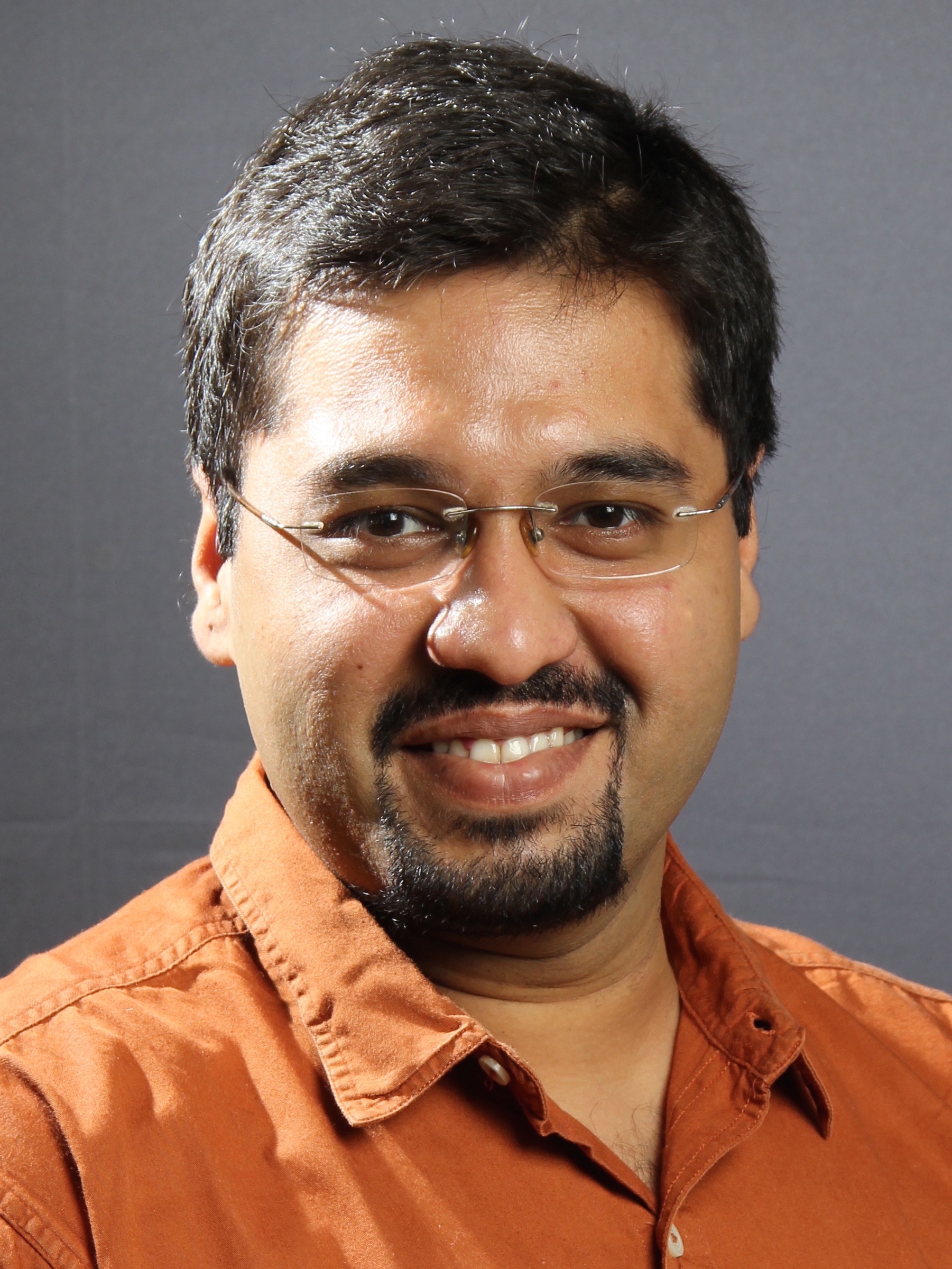}}]{Ankur A. Kulkarni}
Ankur is an Associate Professor with the Systems and Control Engineering group at Indian Institute of Technology Bombay (IITB). He received his B.Tech. from IITB in 2006, M.S. in 2008 and Ph.D. in 2010, both from the University of Illinois at Urbana-Champaign (UIUC). From 2010-2012 he was a post-doctoral researcher at the Coordinated Science Laboratory at UIUC. His research interests include information theory, the role of information in stochastic control, game theory, combinatorial coding theory problems, optimization and variational inequalities, and operations research. He is an Associate (from 2015--2018) of the Indian Academy of Sciences, Bangalore, a recipient of the INSPIRE Faculty Award of the Department of Science and Technology, Government of India, 2013, Best paper awards at the National Conference on Communications, 2017, Indian Control Conference, 2018, International Conference on Signal Processing and Communications (SPCOM) 2018, Excellence in Teaching Award 2018 at IITB  and the William A. Chittenden Award, 2008 at UIUC. He is a consultant to the Securities and Exchange Board of India on regulation of high frequency trading.
\end{IEEEbiography}

\end{document}